\newif\iffurther
\newtheorem{thm}{Theorem}[section] 
\newtheorem{cor}[thm]{Corollary}
\newtheorem{lem}[thm]{Lemma}
\newtheorem{prop}[thm]{Proposition}
\def\g{\mathfrak{g}}
\def\Dim{{\operatorname{Dim}}}
\def\Dimsup{\underline{{\operatorname{Dim}}}}
\def\Ker{{\operatorname{Ker}}}
\def\ad{{\operatorname{ad}}}
\begin{document}

\title{Growth of finitely generated simple Lie algebras}

\author{Be'eri Greenfeld}
\address{Ramat-Gan 5290002, Israel}
\email{beeri.greenfeld@gmail.com}

\thanks{\textit{Mathematics Subject Classification: 16P90, 17B65, 17B20.}}
\thanks{The author is grateful to the referee for his/her useful comments on the paper.}

\begin{abstract}
We realize any submultiplicative increasing function which is equivalent to a polynomial proportion of itself as the growth function of a finitely generated simple Lie algebra. Thus, we obtain a huge and diverse source of intermediate growth functions of simple Lie algebras.
As an application, we resolve two open problems posed by Petrogradsky on Lie algebras with intermediate growth.
\end{abstract}

\maketitle

\section{Introduction}\label{sec:intro}

Let $A$ be a finitely generated algebra. Pick a finite dimensional vector space $V$ which generates the algebra over the base field $F$, and consider the sequence:

$$\gamma_{A,V}(n) =  \dim_F \left(V+V^2+\cdots +V^n\right) $$
where $V^n$ is the space spanned by all $n$-fold products of elements from $V$. For example, in the associative unital case one can choose $1\in V$ and so $V^n\subseteq V^{n+1}$.

This function encodes important combinatorial, algebraic and geometric data of the algebra under investigation. In fact, this function is independent of choice of $V$ up to a natural equivalence relation $\sim$ (to be explicitly defined in Section \ref{sec:2}); hence, from now on, we think of $\gamma_A(n)$ as the growth function of $A$, knowing it is well defined only up to equivalence.

The problem of describing how growth functions of finitely generated associative algebras look like has inspired a flurry of interesting research articles; see for instance \cite{BellZelmanov, BBL, BGJalg, BGIsr, BartholdiSmoktunowicz}. For more on growth of algebras see \cite{KrauseLenagan}.

The behavior of growth functions of Lie 
algebras is rather different compared to growth of associative algebras, and seems, from certain points of view, even more enigmatic. For instance, they need not obey Bergman's gap theorem (which asserts that the growth of an associative algebra cannot be super-linear and subquadratic; an extreme case is demonstrated in \cite{Petro2020}, where restricted Lie algebras whose growth is oscillating between quasilinear and almost exponential are constructed); polynomial growth rate for complex (finitely) $\mathbb{Z}$-graded simple Lie algebras is very rigid, in the sense that it forces one of several concrete structures (by Matiheu's classification, solving Kac's conjecture \cite{Mathieu}); there exist examples of nil Lie algebras of polynomial growth for arbitrarily large base fields \cite{ShestakovZelmanov} (the analog for associative algebras is unknown for uncountable fields); etc. 

Interesting connections between growth of Lie algebras and the growth of their universal enveloping algebras were studied in \cite{Petrogradsky96,Petrogradsky00,Petrogradsky97,Petrogradsky93,Smith}. This work motivates our main theme in this note, whose aim is to prove the following:

\begin{thm} \label{thm_lie}
Let $f:\mathbb{N}\rightarrow \mathbb{N}$ be an increasing, submultiplicative function such that $f(n)\sim n f(n)$. Then there exists a finitely generated simple Lie algebra $\g$ whose growth function satisfies: $$\gamma_\g(n)\sim f(n).$$
\end{thm}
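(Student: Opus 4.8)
The plan is to realize $f$ in two stages: first as the growth function of an auxiliary finitely generated Lie algebra, and then to pass from it to a finitely generated \emph{simple} Lie algebra of the same growth. The hypothesis $f(n)\sim nf(n)$ enters in the stronger-looking but equivalent form $f(n)\sim n^{k}f(n)$ for every fixed $k$ (immediate by induction), which allows polynomial overheads incurred by the construction to be discarded up to $\sim$.

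\emph{Stage 1 (realizing the growth).} Using submultiplicativity of $f$ --- and, to smooth away possible plateaus, the equivalence $f\sim f+\mathrm{id}$ that follows from $n\preceq nf(n)\preceq f$ --- I would realize $f$ up to $\sim$ as $\sum_{i\le n}\dim_F L_i$ for a finitely generated, $\mathbb N$-graded Lie algebra $L=\bigoplus_{i\ge1}L_i$ generated by the finite-dimensional piece $L_1$. Submultiplicativity keeps the prescribed dimension profile within the multiplicative constraints that a graded algebra generated in degree $1$ must obey, and $L$ is then cut out of a free Lie algebra by a homogeneous monomial-type ideal, mirroring the standard combinatorial realization of growth functions by monomial associative algebras; one may in addition normalize $L$ to be $2$-generated with trivial centre.

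\emph{Stage 2 (forcing simplicity).} I would then embed $L$ into a finitely generated simple Lie algebra $\g$ with $\gamma_{\g}\sim\gamma_L$, by means of a wreath-product-type extension --- the Lie analogue of the matrix wreath products used to produce simple associative algebras of controlled growth. One adjoins to $L$ a bounded amount of extra data (finitely many new generators acting on $L$ roughly as derivations) so as to guarantee that (i) the enlargement is still finitely generated; (ii) some fixed generator is expressible, via an explicit bracket word, in terms of \emph{any} prescribed nonzero element, so that the only nonzero ideal is the whole algebra; (iii) on the fixed generating set the $n$-th growth is at most $n^{O(1)}\gamma_L(n)$. Carrying this out --- possibly as a countable iteration in which each step affects the algebra only in degrees tending to infinity and is run against further nonzero elements, the total overhead kept polynomial --- yields a genuinely simple $\g$. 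Then finite generation and simplicity hold by construction; (iii) and $f\sim n^{k}f$ give $\gamma_{\g}\preceq f$; and since $L$ sits in $\g$ as a finitely generated subalgebra, $f\sim\gamma_L\preceq\gamma_{\g}$. Hence $\gamma_{\g}\sim f$, as required.

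\emph{Main obstacle.} The heart of the matter is Stage 2: the simplification mechanism must be at once \emph{effective} enough that in the limit every nonzero element generates the whole algebra as an ideal, \emph{economical} enough that it inflates the growth by no more than a polynomial factor (so that $f\sim n^{k}f$ can absorb it and the upper bound $\gamma_{\g}\preceq f$ is preserved), and \emph{local} enough --- confined to high degrees --- that it neither collapses the delicately tuned growth of $L$ below $f$ nor obstructs the existence of the finitely generated simple limit. Reconciling these three demands across a countable iteration, together with the bookkeeping required to enumerate correctly the elements to be neutralized (especially over small or finite base fields), is the main technical difficulty.
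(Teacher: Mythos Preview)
Your proposal is a research programme, not a proof: both stages leave the essential constructions unspecified. In Stage~1 you assert that a monomial-type quotient of a free Lie algebra with the right growth exists ``mirroring the standard combinatorial realization'' for associative algebras, but no such parallel theory is available; the paper instead realizes $f$ as the complexity of a uniformly recurrent word, passes to the associated \emph{associative} monomial algebra $A_{w'}$, and then proves (Lemma~\ref{lemma commutator}) the nontrivial fact that the commutator Lie subalgebra $[A_{w'},A_{w'}]$ is finitely generated with the same growth, invoking the Alahmadi--Alsulami finite-generation theorem and a dimension comparison $\dim(\mathfrak{A}\cap A(n))\ge\frac14\dim A(n-2)$ that uses triviality of the centre. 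Your Stage~1, as written, supplies none of this.

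Stage~2 is the more serious gap. You describe a hypothetical ``wreath-product-type extension'' and a countable iteration that would force simplicity with only polynomial growth overhead, and you yourself flag reconciling effectiveness, economy, and locality as ``the main technical difficulty''---which is to say, the heart of the theorem is left open. No such Lie-algebraic simplification machine is known, and the paper does something entirely different: it embeds $A_{w'}$ into the convolution algebra $F[\mathfrak{G}_{w'}]$ of the \'etale groupoid of the subshift generated by $w'$; since $w'$ is uniformly recurrent and aperiodic, Nekrashevych's theorem makes $F[\mathfrak{G}_{w'}]$ a \emph{simple} associative algebra with growth $\sim n\,c_{w'}(n)\sim f(n)$. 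Baxter's classical result then gives simplicity of $\g=[F[\mathfrak{G}_{w'}],F[\mathfrak{G}_{w'}]]/(\text{centre}\cap[\,\cdot\,,\,\cdot\,])$, and the Alahmadi--Alsulami--Jain--Zelmanov criterion (existence of a nontrivial idempotent in a simple ring) gives finite generation. The growth bounds follow by sandwiching $\g$ between $[A_{w'},A_{w'}]$ and $F[\mathfrak{G}_{w'}]$. None of these concrete ingredients---uniformly recurrent words of prescribed complexity, groupoid convolution algebras, Baxter's theorem, the AAJZ criterion---appear in your outline, and without them (or a genuine substitute) the argument does not go through.
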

As an application, we are able to answer two questions left open by Petrogradsky (Corollaries \ref{answer Petrogradsky1} and \ref{answer Petrogradsky2}).
Petrogradsky defined a hierarchy of dimensions, $\Dim^q$ and $\Dimsup^q$ for $q=1,2,\dots$ which can be associated with any function.
He constructed Lie and associative algebras of interesting growth types, including non-integral $q$-dimensions, and left as an open problem \cite[Remark in page 347]{Petrogradsky00} and \cite[Remark in page 651]{Petrogradsky97} the question of whether for every $q=3,4,\dots$ and every $\sigma\in (0,1)$ there exists a Lie algebra $\g$ with $\Dim^q(\gamma_{\g}(n))=\Dimsup^q(\gamma_{\g}(n))=\sigma$. We answer this in the affirmative in Corollary \ref{answer Petrogradsky1}.

Petrogradsky also observed that there exist functions `lying between' the layers (namely, $\Dim^q(\gamma_\mathfrak{g}(n))=\infty$ but $\Dim^{q+1}(\gamma_\mathfrak{g}(n))=0$) and left open \cite[Page~464]{Petrogradsky96} the question whether there exist Lie algebras lying between the layers. As another byproduct of our work, we are able to construct Lie algebras whose growth functions lie between $\Dim^q$ and $\Dim^{q+1}$ for each $q\in \mathbb{N}$ (Corollary \ref{answer Petrogradsky2}), thereby answering this problem as well.

We remark that Petrogradsky's questions do not involve simple Lie algebras, and our solutions to them can be derived, in fact, from Lemma \ref{lemma commutator}; however, we feel that Theorem \ref{thm_lie} is of its own interest, and after proving it we are able to provide constructions solving Petrogradsky's questions within the class of \textit{simple} Lie algebras. We remark that simple groups of intermediate growth were only recently constructed (by Nekrashevych, see \cite{SimpInte}).

The resulting Lie algebras of Theorem \ref{thm_lie} are $\mathbb{Z}$-graded (but have infinite dimensional homogeneous components). Note that the condition that $f(n)\sim nf(n)$ is satisfied by any `sufficiently regular' growth function which is rapid at least as $n^{\ln n}$ (as also mentioned in \cite{BartholdiSmoktunowicz}; however, pathological counterexamples exists).

Our construction combines ingredients of four flavors:
\begin{itemize}
    \item Methods developed in \cite{BGJalg} and slightly improved here for controlling the complexity growth of uniformly recurrent words;
    \item Theory of convolution algebras of \'etale groupoid algebras of Bernoulli shifts developed by Nekrashevych in \cite{Nekr};
    \item Finite generation results by Alahmadi, Alsulami, Jain and Zelmanov from \cite{AA, AAJZ};
    \item Classical result of Baxter \cite{Baxter} on simplicity of commutator Lie algebras modulo its intersection with the center.
\end{itemize}

\section{Growth of Lie algebras}\label{sec:2}

Let $F$ be an arbitrary base field (resp.~of characteristic $\neq 2$) and let $A$ be a finitely generated associative (resp.~Lie) $F$-algebra. Recall that the growth function of $A$ with respect to a generating subspace (resp.~Lie generating subspace) is: $$\gamma_{A,V}(n)=\dim_F \left(\sum_{i=0}^{n} V^i\right)$$ or, if $A$ is a Lie algebra: $$\gamma_{A,V}(n)=\dim_F \left(\sum_{i=0}^{n} V^{[i]}\right)$$
where $V^{[i]}$ is spanned by all length-$i$ Lie products of elements from $V$.
This function is independent of choice of $V$ up to the following equivalence relation: we say that $f\preceq g$ if there exist some $C,D>0$ such that $f(n)\leq Cg(Dn)$ for all $n\in \mathbb{N}$, and $f\sim g$ if $f\preceq g$ and $g\preceq f$.

We begin with the following construction, enabling us to transfer from growth functions of certain monomial algebras to growth functions of finitely generated Lie algebras. Given an associative algebra $A$ let $[A,A]$ be the Lie algebra spanned by all (additive) commutators in $A$.

\begin{lem} \label{lemma commutator}
Let $A$ be a finitely generated monomial algebra generated by two nilpotent elements. Assume that the center of $Q$ is trivial, namely, $Z(A)=F$. Then the Lie algebra $\mathfrak{A}=[A,A]$ is finitely generated and:
$$\gamma_A(n)\sim \gamma_\mathfrak{A}(n).$$
\end{lem}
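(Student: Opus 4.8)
The plan is to exploit the natural grading. Being a monomial algebra on the two generators $x_1,x_2$, which by hypothesis are nilpotent (say $x_1^e=x_2^e=0$), $A$ is graded by total degree: $A=\bigoplus_{d\ge 1}A_d$ with $A_d$ spanned by the nonzero monomials of length $d$. Put $a(d)=\dim A_d$, so $\gamma_A(n)=\sum_{d\le n}a(d)$ up to a bounded additive term. The commutator space $\mathfrak{A}=[A,A]$ is a graded subspace, $\mathfrak{A}=\bigoplus_d \mathfrak{A}_d$ with $\mathfrak{A}_d=\mathfrak{A}\cap A_d$. I would reduce the whole statement to two inputs: (i) $\mathfrak{A}$ is finitely generated; and (ii) there is a constant $\varepsilon>0$ with $\dim\mathfrak{A}_d\ge\varepsilon\,a(d)$ for all large $d$. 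Granting these: choose a finite homogeneous Lie generating set $W\subseteq A_{\le c}$ of $\mathfrak{A}$; since a bracket of $i$ homogeneous generators sits in degrees between $2i$ and $ci$, one gets $\mathfrak{A}_{\le 2n}\subseteq\sum_{i\le n}W^{[i]}\subseteq A_{\le cn}$, hence $\dim\mathfrak{A}_{\le n}\le\gamma_{\mathfrak{A}}(n)\le\gamma_A(cn)$. Combining this with (ii) and $\gamma_A(n)\to\infty$ yields $\gamma_A(n)\preceq\gamma_{\mathfrak{A}}(n)\preceq\gamma_A(n)$, i.e.\ $\gamma_{\mathfrak{A}}\sim\gamma_A$.

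Input (i) is precisely where the hypothesis $Z(A)=F$ enters, and I would simply invoke the finite generation theorems of Alahmadi, Alsulami, Jain and Zelmanov \cite{AA,AAJZ}: the commutator Lie algebra of a finitely generated associative algebra which is generated by nilpotents and has trivial center is finitely generated.

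The substance is (ii). The key identity is the cyclic-rotation congruence: for a nonzero monomial $m=x_{j_1}\cdots x_{j_d}$,
$$
m-x_{j_d}x_{j_1}\cdots x_{j_{d-1}}=\big[x_{j_1}\cdots x_{j_{d-1}},\,x_{j_d}\big]\in\mathfrak{A}_d,
$$
so, iterating, all cyclic rotations of $m$ are congruent modulo $\mathfrak{A}_d$; in particular, if some rotation of $m$ vanishes in $A$ then $m\in\mathfrak{A}_d$. Call a nonzero monomial of length $d$ \emph{good} if all its cyclic rotations are nonzero in $A$, and \emph{bad} otherwise, with counts $g$ and $b$, so $a(d)=g+b$. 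The $b$ bad monomials are linearly independent elements of $\mathfrak{A}_d$. The good monomials partition into $s$ cyclic necklaces; a necklace of minimal period $p$ accounts for $p$ of the good monomials and supplies the $p-1$ differences of consecutive rotations, which are linearly independent in $\mathfrak{A}_d$ and supported away from the bad monomials. Nilpotency of $x_1,x_2$ forbids period $1$ once $d\ge e$ (a period-$1$ good necklace would be $x_i^d\neq 0$), so every necklace has $p\ge 2$, whence $g\ge 2s$ and
$$
\dim\mathfrak{A}_d\ \ge\ b+(g-s)\ \ge\ b+\tfrac{g}{2}\ \ge\ \tfrac12 a(d).
$$
Thus (ii) holds with $\varepsilon=\tfrac12$ for $d\ge e$, the finitely many smaller degrees contributing only an additive constant.

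The main obstacle I anticipate is not an isolated computation but ensuring that the bound in (ii) is \emph{linear} in $a(d)$ with a uniform constant: the rotation identity, the good/bad dichotomy, and the nilpotency of the generators must interlock exactly as above, and nilpotency is genuinely needed (for $A=F[x]/(x^e)$ one has $[A,A]=0$). A weaker estimate such as $\dim\mathfrak{A}_d\ge a(d)/d$ would fail to survive $\sim$ when $\gamma_A$ grows slowly. I would also confirm at the outset that $A$ is infinite-dimensional (otherwise the claim is degenerate), and check that the hypotheses of \cite{AA,AAJZ} are met in the precise form needed for (i).
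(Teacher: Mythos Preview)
Your argument is correct. The overall scaffolding (use the grading, establish a linear lower bound $\dim\mathfrak{A}_d\ge\varepsilon\,a(d)$, cite \cite{AA} for finite generation, then sandwich growth) matches the paper's, but your proof of the linear lower bound is genuinely different. The paper obtains $\dim(\mathfrak{A}\cap A(n))\ge\frac14\dim A(n-2)$ by writing $A(n-1)=x_1S_1\oplus x_2S_2$, picking the larger half, and using $Z(A)=F$ to show that at least one $\ad_{x_j}$ has small kernel on it. You instead use the cyclic-rotation congruence together with nilpotency of the generators (forbidding period-one necklaces) to get $\dim\mathfrak{A}_d\ge\frac12 a(d)$. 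Your route is more combinatorial, yields a better constant, and uses only the nilpotency hypothesis for the dimension estimate; the paper's route is more algebraic and uses the triviality of the center already at this step. Both rely on the monomial structure.

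One small remark: the result you need from \cite{AA} is that $[A,A]$ is Lie finitely generated when $A$ is associatively generated by finitely many nilpotents; the trivial-center hypothesis is not required there (it enters in the paper only through the $\ad$-kernel argument and, later, through Baxter's theorem). Also, your claim that homogeneous Lie generators can be taken in degrees $\ge 2$ is justified since $\mathfrak{A}\subseteq\bigoplus_{d\ge 2}A_d$, which you use to get the lower inclusion $\mathfrak{A}_{\le 2n}\subseteq\sum_{i\le n}W^{[i]}$.
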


\begin{proof}
Let $A=F\left<x_1,x_2\right>/I$ where $I\triangleleft F\left<x_1,x_2\right>$ is a monomial ideal containing $x_1^t,x_2^t$ for some $t\in \mathbb{N}$. Write: $A=\bigoplus_{n=0}^{\infty}A(n)$, the homogeneous decomposition with respect to the length-induced grading on $A$.
We claim that for all $n>2$ we have that $$\dim_F \left(\mathfrak{A}\cap A(n)\right)\geq \frac{1}{4}\dim_F A(n-2).$$
Fix $n>2$ and write $A(n-1)=x_1S_1\oplus x_2S_2$ where $S_{1,2}\subseteq A(n-2)$, and observe that for some $i\in\{1,2\}$ we have: $$\dim_F x_iS_i\geq \frac{1}{2}\dim_F A(n-1).$$
Now $\Ker(\ad_{x_1})\cap\Ker(\ad_{x_2})=Z(A)=F$ has zero intersection with $A(m)$ for $m>0$ so for some $j\in \{1,2\}$ we have that $$\dim_F\left(\Ker(\ad_{x_j})\cap x_iS_i\right)\leq \frac{1}{2}\dim_F x_iS_i,$$
It follows that:
$$\dim_F \left(\mathfrak{A}\cap A(n)\right)\geq \dim_F\ad_{x_j}(x_iS_i)\geq \frac{1}{2}\dim_F x_iS_i\geq \frac{1}{4}\dim_F A(n-2).$$
To complete the proof, notice that by \cite{AA}, the Lie algebra $\mathfrak{A}$ is finitely generated, say, by some finite dimensional homogeneous space $V$. Then for all $k\in \mathbb{N}$ we have that $\mathfrak{A}\cap A(k)\subseteq \sum_{m=0}^{\infty}V^{[m]}$ so applying the $k$-th degree projection $T_k:\mathfrak{A} \twoheadrightarrow A(k)$ for arbitrary $n\geq k$ we observe that $$\mathfrak{A}\cap A(k)=T_k(\mathfrak{A}\cap A(k))\subseteq T_k(\sum_{m=0}^{n}V^{[m]})$$
so: $$\gamma_{\mathfrak{A}}(n) = \dim_F \sum_{m=0}^{n}V^{[m]}\geq \dim_F\bigoplus_{m=3}^{n}\left(\mathfrak{A}\cap A(m)\right)\geq \frac{1}{4}\dim_F \bigoplus_{m=1}^{n-2}A(m)\sim\gamma_A(n).$$

On the other hand, it is evident that $$\gamma_{\mathfrak{A}}(n) = \dim_F \sum_{m=0}^{n}V^{[m]}\leq \dim_F \bigoplus_{m=0}^{dn} A(m)\sim \gamma_A(n)$$
where $d$ is the maximum degree of elements from $V$. Hence $\gamma_A(n)\sim \gamma_{\mathfrak{A}}(n)$, as desired.
\end{proof}

\section{Combinatorics of infinite words} \label{sec:mon}

Suppose $\Sigma=\{x_1,\dots,x_d\}$ is a finite alphabet. An infinite word $w\in \Sigma^\mathbb{N}$ is \textit{recurrent} if every factor appears infinitely many times, and \textit{uniformly recurrent} if for every factor $u$ of $w$ there exists a constant $C=C(u)$ such that every factor (namely, a finite subword) of $w$ of length $C$ contains a copy of $u$. The \textit{complexity} function of $w$ is: $$c_w(n):=\#\{\text{length-}n\  \text{factors of }w\}.$$

The monomial algebra \textit{associated with} $w$, denoted $A_w$ is the quotient of the free algebra $F\left<\Sigma\right>$ modulo the ideal generated by all monomials which do not appear as factors of $w$. Thus, $A_w$ is spanned by monomials which are factors of $w$, and so the growth of $A_w$ satisfies $\gamma_{A_w}(n)= c_w(1)+\cdots+c_w(n)$. It is known that $A_w$ is prime if and only if $w$ is recurrent \cite[Theorem~3.22~(a)]{BBL}, and it is just-infinite if and only if $w$ is uniformly recurrent \cite[Theorem~3.2.2~(2)]{Madill}. For more on monomial algebras, see \cite{BBL, Madill}.
We need the following technical lemma for the sequel:

\begin{lem} \label{lem_monomial}
Suppose $w$ is an infinite word in $\Sigma^{\mathbb{N}}$. Then there exists an infinite word $w'\in \{x,y\}^{\mathbb{N}}$ with $c_w(n)\sim c_{w'}(n)$ such that if moreover $w$ is recurrent (resp.~uniformly recurrent) then so is $w'$.
\end{lem}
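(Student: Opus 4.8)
The plan is to encode the alphabet $\Sigma=\{x_1,\dots,x_d\}$ into a two-letter alphabet by a fixed-length block substitution and then apply it letterwise to $w$. Concretely, pick $k$ with $2^k \geq d$ and fix an injection $\varphi\colon \Sigma \hookrightarrow \{x,y\}^k$ whose image is a prefix code (automatic, since all images have the same length $k$); extend $\varphi$ to finite and infinite words by concatenation, and set $w' := \varphi(w) \in \{x,y\}^{\mathbb{N}}$. The point of using constant-length blocks is that the factor structure of $w'$ is rigidly controlled by that of $w$: every length-$n$ factor of $w'$ sits inside $\varphi(u)$ for some factor $u$ of $w$ of length at most $\lceil n/k\rceil + 1$, and is determined by $u$ together with an offset in $\{0,\dots,k-1\}$. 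To make the decoding clean I would either fix a letter $x_1\in\Sigma$ to start $w$ with a marker, or simply observe that block boundaries are recoverable up to the $k$ possible phases; the counting argument below does not need exact recovery, only the two-sided bound.

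Next I would prove $c_w(n)\sim c_{w'}(n)$. For the inequality $c_{w'}(n)\preceq c_w(n)$: a length-$n$ factor of $w'$ is a subword of $\varphi(v)$ where $v$ is a factor of $w$ of length $m:=\lceil n/k\rceil+1$, and it is pinned down by $v$ and a starting offset $<k$, so $c_{w'}(n)\leq k\cdot c_w(m)\leq k\cdot c_w(n)$ (using that $c_w$ is nondecreasing and $m\le n$ for $n\ge k$; small $n$ are absorbed into the constants of $\sim$). For the reverse $c_w(n)\preceq c_{w'}(n)$: distinct length-$n$ factors $u_1\neq u_2$ of $w$ give distinct length-$kn$ factors $\varphi(u_1)\neq\varphi(u_2)$ of $w'$ (injectivity of $\varphi$ on length-$n$ words, which follows from injectivity on letters together with constant block length), hence $c_w(n)\leq c_{w'}(kn)$. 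Combining, $c_{w'}(n)\leq k\,c_w(n)$ and $c_w(n)\leq c_{w'}(kn)$, which is exactly $c_w(n)\sim c_{w'}(n)$ with the equivalence of Section \ref{sec:2}.

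Finally I would transfer recurrence and uniform recurrence. If $u'$ is a factor of $w'$, it occurs inside $\varphi(u)$ for some factor $u$ of $w$; since $u$ recurs infinitely often in $w$ (recurrence of $w$), each such occurrence of $u$ produces an occurrence of $\varphi(u)\supseteq u'$ in $w'$ at the corresponding position, so $u'$ recurs infinitely often — hence $w'$ is recurrent. For uniform recurrence: given $u'$, choose $u$ as above with constant $C=C(u)$ so that every length-$C$ factor of $w$ contains $u$; then any factor of $w'$ of length $\geq kC + 2k$ contains a full block image $\varphi(v)$ with $|v|=C$ entirely inside it (there is enough room to skip to a block boundary and fit $C$ consecutive blocks), and $\varphi(v)$ contains some $\varphi(u)$, hence contains $u'$; so $w'$ is uniformly recurrent with constant $k C(u)+2k$.

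The only genuinely delicate point is bookkeeping the block offsets — making sure that "a length-$n$ factor of $w'$ lies in $\varphi(v)$ for a factor $v$ of $w$ of controlled length, up to one of $k$ phases" is stated with the right constants, and that the small-$n$ regime (where $n<k$) is correctly swept into the $\sim$-equivalence; none of this is hard, but it is where an off-by-one error would hide. Everything else is a routine consequence of using a constant-length, letter-injective substitution.
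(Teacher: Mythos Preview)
Your proof is correct and takes a genuinely different route from the paper's. The paper uses the \emph{variable-length} substitution $\sigma: x_i \mapsto xy^i$, so that every block begins with $x$ and the letter $x$ acts as a synchronizing comma: to bound $c_{w'}(n)$ from above, the paper extends any factor of $w'$ by at most $d$ letters on each side until it has the form $\sigma(v)$, and then counts preimages (at most $(d+1)^2$ per $v$). Your fixed-length block code replaces this decoding step by a phase argument: there are only $k$ possible offsets of a factor of $w'$ relative to the block grid, giving $c_{w'}(n)\le k\,c_w(\lceil n/k\rceil+1)$ directly. Your approach makes the counting slightly cleaner (no left/right extension, no preimage count), and the recurrence transfer goes through identically. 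The paper's approach, on the other hand, has the minor advantage that the marker letter $x$ makes the blocks visibly self-delimiting, so one never has to speak of phases at all. Both arguments are of the same difficulty and yield the same conclusion; your version would be a perfectly acceptable substitute.
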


\begin{proof}
Consider the substitution $\sigma:\Sigma^{*}\rightarrow \{x,y\}^{*}$ given by $\sigma:x_i\mapsto xy^{i}$. Then $\sigma$ substitutes $w$ into an infinite word in $\{x,y\}^{\mathbb{N}}$, say, $w'$. It is straightforward to check that $w'$ is recurrent (resp.~uniformly recurrent) whenever $w$ is.

To calculate the complexity growth of $w'$, notice that: $$c_w(n)\leq c_{w'}((d+1)n)$$
since $\sigma$ carries length-$n$ factors of $w$ to factors of $w'$ of length $\leq (d+1)n$. For the opposite direction, fix a factor $v'$ of $w'$.
It is possible to extend $v'$ with $y^i$ from the right and with $xy^j$ ($0\leq i,j\leq d$) from the left such that the resulting word has the form $\sigma(v)$ where $v$ is a factor of $w$. This process gives rise to a map of monomials: $$\Psi: \{\text{length-}n\ \text{factors of }w'\}\rightarrow \bigcup_{p=0}^{2d+2} \{\text{length-}(n+p)\ \text{factors of }w\}$$ such that a factor of $w$ has at most $(d+1)^2$ preimages ($d+1$ possibilities to delete letters from each side). Thus: $$c_{w'}(n)\leq (d+1)^2\sum_{p=0}^{2d+2} c_w(n+p),$$
 and therefore $c_w(n)\sim c_{w'}(n)$.
\end{proof}

We need the following improved version of \cite[Theorem~2.1]{BGJalg}:

\begin{thm} \label{simple}
Let $g:\mathbb{N}\rightarrow \mathbb{N}$ be an increasing, submultiplicative function such that $g(n)\sim ng(n)$. Then there exists a uniformly recurrent word $w$ on some finite alphabet such that $c_w(n)\sim g(n)$.
\end{thm}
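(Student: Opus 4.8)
The plan is to build $w$ as a limit of a carefully controlled sequence of finite words, adapting the construction of \cite[Theorem~2.1]{BGJalg} but tightening the bookkeeping so that the complexity can be pinned down to $c_w(n)\sim g(n)$ rather than merely bracketed between $g$ and a polynomial multiple of it. The starting point is the observation that it suffices to produce $w$ over a \emph{binary} alphabet, because the two-letter reduction is for free: by Lemma~\ref{lem_monomial}, any uniformly recurrent word over a finite alphabet with complexity $\sim g$ can be pushed to a binary one preserving both uniform recurrence and the complexity class. So I may work over whatever finite alphabet is most convenient during the construction.

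The core of the argument is an iterative substitution/concatenation scheme. I would fix a rapidly increasing sequence of scales $n_1 < n_2 < \cdots$ and build a nested family of finite blocks $B_1, B_2, \dots$, where $B_{k+1}$ is obtained from $B_k$ by a uniformly recurrent "filling" pattern: $B_{k+1}$ is a concatenation of copies of $B_k$ together with a small amount of new material, arranged so that (i) every factor of $B_{k+1}$ of length $\geq C_k$ contains a copy of $B_k$ (this gives uniform recurrence of the limit $w$), and (ii) the number of genuinely new length-$n$ factors introduced when passing from $B_k$ to $B_{k+1}$ is controlled from above and below by the \emph{increments} of $g$ at the relevant scale. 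The hypotheses on $g$ are exactly what make this possible: monotonicity lets me ensure $c_w$ is nondecreasing and that partial constructions never overshoot; submultiplicativity, $g(m+n)\le g(m)g(n)$, controls how complexities compound under concatenation (a length-$n$ factor of a concatenation of $k$ blocks of length $\ell$ is determined by its position modulo $\ell$ together with its content, giving a bound of the shape $k\cdot(\text{something})$); and the key condition $g(n)\sim n g(n)$ is what guarantees there is "enough room": the polynomial slack $n$ absorbs the multiplicative constants $C, D$ that accumulate over infinitely many stages, so that a bound like $c_w(n) \le C\, n^a\, g(Dn)$ collapses back to $c_w(n)\preceq g(n)$. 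The lower bound $c_w(n)\succeq g(n)$ is arranged by hand: at stage $k$ I deliberately inject enough distinct new factors at scale $n_k$ so that $c_w(n_k)\ge g(n_k)$, and monotonicity propagates this between scales.

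Concretely the steps, in order, are: (1) reduce to the binary case via Lemma~\ref{lem_monomial}, or equivalently allow a finite alphabet and defer the reduction; (2) recall/restate the construction of \cite[Theorem~2.1]{BGJalg}, which already yields a uniformly recurrent $w$ with $g(n)\preceq c_w(n)\preceq n^{O(1)} g(n)$ (or some explicit polynomial loss); (3) isolate precisely where the polynomial loss enters — it comes from the number of "seams" between glued blocks at each scale, and from counting factors that straddle several blocks — and show this loss is at most a fixed power of $n$ \emph{independent of the stage}, by choosing the scales $n_k$ to grow fast enough (e.g. super-exponentially) that straddling factors at scale $n$ only ever see $O(1)$ seams from a single stage; (4) invoke $g(n)\sim n g(n)$, hence $g(n)\sim n^a g(n)$ for every fixed $a$, to absorb this loss and conclude $c_w(n)\sim g(n)$; (5) re-derive uniform recurrence of the limit word from property (i) above, noting the gap constants $C_k$ are finite at each stage even though they tend to infinity.

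The main obstacle I anticipate is step (3): decoupling the \emph{multiplicative} complexity bound from the polynomial-loss term uniformly across all infinitely many stages. In the original argument the loss is tolerated because the target conclusion was only an equivalence up to polynomial factors; here I need the loss per scale to be genuinely bounded (not just polynomially bounded with a stage-dependent exponent), which forces a more delicate choice of the block lengths and of how much new material is inserted at each stage — inserting too little breaks the lower bound, inserting too much or gluing too many copies breaks the upper bound. Verifying that a single choice of growth rate for $(n_k)$ simultaneously satisfies the uniform-recurrence requirement, the lower-bound injection requirement, and the uniform upper-bound requirement is the technical heart of the proof; everything else is either cited (\cite{BGJalg}, Lemma~\ref{lem_monomial}) or a routine manipulation of the equivalence relation $\sim$ using submultiplicativity and the hypothesis $g(n)\sim n g(n)$.
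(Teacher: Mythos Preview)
Your route and the paper's diverge at the key step. The paper does \emph{not} re-open the block construction from \cite{BGJalg} and track polynomial losses; instead it treats \cite[Theorem~2.1]{BGJalg} as a black box with three explicit hypotheses---roughly: (i) $nf'(n)\le f'(\alpha n)$ for some $\alpha$; (ii) $f'(2^{n+1})\le \beta^{-1}f'(2^n)^2$ eventually, for every $\beta$; (iii) $\prod_n\bigl(1+f'(2^n)/f'(2^{n+1})\bigr)$ converges---and the whole proof consists of manufacturing, from $g$, a $\sim$-equivalent function $f'$ that satisfies them. Concretely one sets $f(n)=ng(n)$, picks $t$ with $f(2^t n)\ge nf(n)$, and defines the weighted average
\[
f'(n)=\sum_{i=0}^{t-1} n^{-i/t}\,f(2^i n).
\]
This sum is rigged so that $f'(2n)\ge \tfrac{1}{2}n^{1/t}f'(n)$, forcing the ratios $f'(2^n)/f'(2^{n+1})$ to decay exponentially and dispatching condition~(iii); conditions (i) and (ii) then follow from $g\sim ng$ and submultiplicativity by direct computation. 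Applying \cite[Theorem~2.1]{BGJalg} to $f'$ gives $c_w\sim f'\sim g$ directly, with no loss to absorb.

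Your plan---feed $g$ straight into the construction, accept a polynomial loss, then invoke $g\sim n^a g$ to erase it---is reasonable in spirit, but step~(2) is the weak point. The hypotheses (i)--(iii) in \cite{BGJalg} are not there merely to upgrade the conclusion from ``up to a polynomial factor'' to ``$\sim$''; they are what makes the recursive block scheme well-posed in the first place (condition~(iii) in particular controls convergence of the nested gluing). A bare increasing submultiplicative $g$ need not satisfy (ii) or (iii), so it is not clear the construction even runs on $g$, let alone outputs a uniformly recurrent word with $c_w\preceq n^{O(1)}g$. You would therefore have to re-prove a variant of the construction under weaker hypotheses before there is any polynomial loss to absorb---essentially rewriting \cite{BGJalg}. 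The paper avoids this entirely by massaging the \emph{input} rather than the machine, which is both shorter and cleaner.
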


\begin{proof}
We define a new function, $f(n)=ng(n)$. It is evident that $f\sim g$, and $f$ is increasing and submultiplicative as long as $g$ is.
We do not repeat the construction from \cite{BGJalg} but rather explain the adjustment we made in the formulation of the result. Indeed, the proof of \cite[Theorem~2.1]{BGJalg} produces a monomial algebra of the form $A_w$ for some uniformly recurrent word $w$ over a suitable finite alphabet.  
Next modify $f$ as follows. Pick $t\geq 1$ such that $f(2^tn)\geq nf(n)$ and put:
\[
f'(n):=\sum_{i=0}^{t-1} n^{-\frac{i}{t}} f(2^i n)
\]
Then $f'$ is increasing and $f\sim f'$, and in addition $f'(2n)\geq \frac{1}{2}n^{\frac{1}{t}}f'(n)$ (see \cite[Section~3.1]{BGIsr}):

\begin{eqnarray*}
f'(2n) & = & \sum_{i=0}^{t-1} (2n)^{-\frac{i}{t}} f(2^{i+1} n) \\
& \geq & \frac{1}{2}\sum_{i=0}^{t-1} n^{-\frac{i-1}{t}} f(2^{i} n) \\
& \geq & \frac{1}{2} n^{\frac{1}{t}}\sum_{i=0}^{t-1} n^{\frac{i}{t}} f(2^i n) \\
& \geq & \frac{1}{2}n^{\frac{1}{t}}\sum_{i=0}^{t-1} n^{-\frac{i}{t}} f(2^i n) + \left(n^{-1}f(2^t n) - f(n)\right) \\
& \geq & \frac{1}{2}n^{\frac{1}{t}}f'(n)
\end{eqnarray*}
Note that $f'$ need not take integral values; however, diving into the proof of \cite[Theorem~2.1]{BGJalg}, we see that we need only assume that the given function is defined on a set of the form $\{2^{tn}\}_{n=1}^{\infty}$ and satisfies $f'(2^{n+1})\leq f'(2^n)^2$ (rather than submultiplicativity); this will follow from the second condition which now follows.
We now show that $f'(n)$ fulfills the conditions of \cite[Theorem~2.1]{BGJalg}:

\begin{enumerate}
    \item The first condition of \cite[Theorem~2.1]{BGJalg} (namely, that $nf'(n)\leq f'(\alpha n)$ for some $\alpha>0$) is satisfied since $f'(n)\sim f(n)\sim g(n)$ and $g(n)\sim ng(n)$.
    
    \smallskip
    
    \item The second condition (namely, that for all $\beta>0$ there exists some $n_\beta$ such that for all $n\geq n_\beta$ we have that $f'(2^{n+1})\leq \frac{1}{\beta}f'(2^n)^2$) is satisfied since:

    \begin{eqnarray*}
    f'(2^{n})^2 & = & \left(\sum_{i=0}^{t-1} 2^{-\frac{in}{t}} f(2^{n+i}) \right)^2 \\
    & \geq & \sum_{i=0}^{t-1} 2^{-\frac{2in}{t}} f(2^{n+i})^2 \\
    & = & \sum_{i=0}^{t-1} 2^{-\frac{2in}{t}}\cdot 2^{2n+2i} g(2^{n+i})^2 \\
    & \geq & \sum_{i=0}^{t-1} 2^{-\frac{2in}{t}+2n+2i} g(2^{n+i+1}) 
     \end{eqnarray*}
but: 
\begin{eqnarray*}
-\frac{2in}{t}+2n+2i & = & n\left(2-\frac{2i}{t}\right)+2i\\
& \geq & n\left(\frac{1}{2} - \frac{i}{t}\right) + \left(1 - \frac{1}{t} \right)i+n+1 \\
& = & \frac{1}{2} n + \left(-\frac{i(n+1)}{t}+n+i+1\right),
\end{eqnarray*}
so the above yields:

\begin{eqnarray*}
f'(2^n)^2 & \geq & 2^{\frac{1}{2} n} \sum_{i=0}^{t-1} 2^{-\frac{i(n+1)}{t}} \cdot 2^{n+i+1} g(2^{n+i+1}) \\
& \geq & 2^{\frac{1}{2} n}\sum_{i=0}^{t-1} 2^{-\frac{i(n+1)}{t}} f(2^{n+i+1}) = 2^{\frac{1}{2} n}f'(2^{n+1})
\end{eqnarray*}
and hence:
$$\frac{f'(2^{n+1})}{f'(2^n)^2}\leq 2^{-\frac{1}{2} n}\xrightarrow{n\rightarrow \infty} 0.$$

    \smallskip

    \item As for the third condition (namely, that $\prod_{n=0}^{\infty}\left(1+\frac{f'(2^n)}{f'(2^{n+1})}\right)$ is convergent), notice that since $f'(2n)\geq \frac{1}{2}n^{\frac{1}{t}}f'(n)$, the sequence $\Big\{\frac{f'(2^{n})}{f'(2^{n+1})}\Big\}_{n=1}^{\infty}$ decays exponentially and therefore $\prod_{n=0}^{\infty}\left(1+\frac{f'(2^n)}{f'(2^{n+1})}\right)$ is convergent.
\end{enumerate}
Finally, we obtain a uniformly recurrent word $w$ whose complexity function satisfies: 
\[
c_w(n)\sim f'(n)\sim f(n)\sim g(n). \qedhere
\]
\end{proof}
We need the following well known auxiliary lemma, whose proof is included for the reader's convenience:

\begin{lem} \label{auxiliary}
If $w\in \Sigma^\mathbb{N}$ is a recurrent word then there exists a bi-infinite word, $u\in \Sigma^{\mathbb{Z}}$ such that $w$ and $w'$ share the same factors.
\end{lem}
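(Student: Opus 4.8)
The plan is to build the bi-infinite word $u\in\Sigma^{\mathbb{Z}}$ as a limit of longer and longer factors of $w$, using a compactness argument in the product topology on $\Sigma^{\mathbb{Z}}$. Concretely, I would first use recurrence to produce, for each $n$, a factor $v_n$ of $w$ of length $2n+1$ that occurs in $w$ \emph{with arbitrarily large left context}; the cleanest way is to fix an exhaustion of $w$ by its prefixes and note that since each prefix occurs infinitely often, one can find, for every $k$, an occurrence of the length-$(2k+1)$ central block of some fixed reference factor that is preceded in $w$ by at least $k$ letters. Passing to a subsequence via König's lemma / sequential compactness of $\Sigma^{\mathbb{Z}}$, I extract a bi-infinite word $u$ all of whose finite factors are factors of $w$.

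Next I would verify the reverse inclusion: every factor of $w$ is a factor of $u$. Here recurrence is exactly what is needed. Given a factor $p$ of $w$, recurrence says $p$ occurs infinitely often in $w$; in particular, for each $m$ there is an occurrence of $p$ in $w$ with at least $m$ letters of $w$ on each side of it (one gets the right context for free since $w$ is one-sided infinite, and the left context by choosing an occurrence far enough to the right — here is where I must be slightly careful, since a one-sided infinite recurrent word has plenty of room on the right but the left context of a \emph{given} occurrence is bounded; the fix is that infinitely many occurrences means occurrences starting at arbitrarily large positions, each of which therefore has arbitrarily long finite words of $w$ appearing to its left). Combining these two-sided extensions of $p$ with the compactness construction above — or more simply, by building $u$ as a limit of factors that are themselves chosen to contain every prescribed finite factor of $w$ — yields that $u$ and $w$ have exactly the same set of finite factors.

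In more detail, the cleanest packaging is: enumerate the factors of $w$ as $p_1,p_2,\dots$ (the factor set is countable). Inductively choose factors $q_1\sqsubseteq q_2\sqsubseteq\cdots$ of $w$ (each a factor of the next, sitting "in the middle") such that $q_k$ contains $p_1,\dots,p_k$ as factors and $|q_k|\to\infty$ with $q_k$ having length at least $k$ on each side of a fixed center; this is possible by recurrence, since from any factor one can extend both to the left and to the right inside $w$ while absorbing the next required $p_{k+1}$ (again using that $p_{k+1}$ occurs with long left and right context). The nested sequence $\{q_k\}$ determines a bi-infinite word $u\in\Sigma^{\mathbb{Z}}$; every factor of $u$ lies in some $q_k$ hence is a factor of $w$, and conversely every $p_i$ is a factor of $q_i$ hence of $u$. (The statement as printed says "$w$ and $w'$ share the same factors", evidently a typo for $u$.) The main obstacle, such as it is, is purely bookkeeping: arranging the two-sided extension inside a \emph{one-sided} infinite word, which is handled by the observation that recurrence forces occurrences of every factor at arbitrarily large positions, guaranteeing arbitrarily long admissible left contexts.
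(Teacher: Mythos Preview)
Your proposal is correct and follows essentially the same route as the paper: build a nested sequence of factors of $w$, each sitting centrally inside the next, and take the bi-infinite limit. The paper streamlines one step you flag as bookkeeping: instead of enumerating the factors $p_1,p_2,\dots$ and absorbing them one at a time, it simply takes each $u_{t+1}$ to be a \emph{prefix} $w[1,2l_{t+1}+|u_t|-2]$ of $w$ (with $u_t$ recentered via recurrence), so that the reverse inclusion ``every factor of $w$ is a factor of $u$'' is immediate from the fact that the $u_t$'s are prefixes of unbounded length.
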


\begin{proof}
We denote by $w[i,j]$ the subword of $w$ starting at the $i$-th slot and ending at the $j$-th slot. Let $u_1=w[1,1]$, then by recurrence there exists $l_2\geq 2$ such that $w[l_2,l_2]=u_1$. Let $u_2 = w[1, 2l_2-1]$; it can be written as $u_2 = p_2u_1q_2$ (where $p_2 = w[1,l_2-1]$, $q_2 = w[l_2+1, 2l_2-1]$). Note that $|p_2|=|q_2|\geq 1$.

Now suppose $u_t$ was constructed according to $l_2,\dots,l_t$. Denote $c=|u_t|$. Then there exists $l_{t+1}\gg c$ such that $w[l_{t+1},l_{t+1}+c-1]=u_t$. Set $u_{t+1} = w[1, 2l_{t+1}+c-2]$. Then $u_{t+1} = p_{t+1}u_tq_{t+1}$ (where $p_{t+1}=w[1,l_{t+1}-1]$, $q_{t+1}=w[l_{t+1}+c,2l_{t+1}+c-2]$, and in particular $|p_{t+1}|=|q_{t+1}|\geq 1$). 

This way we have a bi-infinite word $u = \lim_{t\rightarrow \infty} u_t$. Obviously every subword of $u$ is a subword of $w$, and also vice versa, since every finite subword of $w$ is contained in some $w[1,m]$ for $m\gg 1$.
\end{proof}

\section{Convolution algebras of \'{e}tale groupoids} \label{groupoids}

A key ingredient in our construction is convolution algebras of \'{e}tale groupoids. For a comprehensive exposition of the topic, we refer the reader to \cite{Nekr}. For the reader's convenience, we give a brief introduction of the topic now, focusing on the examples and results which will be needed in the next sections.

A \textit{groupoid} $\mathfrak{G}$ is a set endowed with a partial multiplication operation, which is associative and invertible (whenever defined). Equivalently, we can think of $\mathfrak{G}$ as the set of isomorphisms of a small category (a small category is a category in which the classes of objects and morphisms are sets).

We will be using \textit{topological groupoids}; these are groupoids endowed with topologies, with respect to which multiplication and inversion are continuous.
A \textit{bisection} $F\subseteq \mathfrak{G}$ is a subset for which the maps $g\mapsto g^{-1}g,\ g\mapsto gg^{-1}$ (called \textit{origin} and \textit{target}, respectively) induce homeomorphisms (onto the images of $F$ under them); elements of the form $g^{-1}g$ are called \textit{units}. A topological groupoid is \textit{\'etale} if its topology admits a basis of neighborhoods consisting of bisections.

Let $\mathfrak{G}$ be a Hausdorff \'etale groupoid whose space of units is totally disconnected. Then one can associate with $\mathfrak{G}$ an associative algebra, $F[\mathfrak{G}]$, called the \textit{convolution algebra} of $\mathfrak{G}$. This algebra consists of all continuous, compactly supported functions $f:\mathfrak{G}\rightarrow F$ (here the base field $F$ is arbitrary and endowed with the discrete topology). While the linear structure of $F[\mathfrak{G}]$ is clear, its multiplication is given by convolution:
$$(f_1\cdot f_2)(g)=\sum_{h\in \mathfrak{G}} f_1(h)f_2(h^{-1}g).$$
The algebra $F[\mathfrak{G}]$ is spanned by characteristic functions of compact open bisections.

A leading example from which we derive our constructions now follows.
Fix a finite alphabet $\Sigma$ and consider the space of bi-inifnite sequences over $\Sigma$, namely, $\Sigma^{\mathbb{Z}}$ (endowed with the usual Tychonoff topology). For a closed, shift-invariant subset $\mathfrak{X}\subseteq \Sigma^{\mathbb{Z}}$ one defines the \textit{groupoid of germs} $\mathfrak{G}_{\mathfrak{X}}$ of the action $\mathbb{Z} \curvearrowright \mathfrak{X}$: its elements are of the form $(t,x)\in \mathbb{Z}\times \mathfrak{X}$ and (partial) multiplication is given by:
$$(m_2, x^{m_1})\cdot(m_1,x) = (m_1+m_2,x)$$
where $x^m$ is the $m$-th shift of $x\in \mathfrak{X}$,
and thus the groupoid is a $\mathbb{Z}$-indexed union of countably many copies of $\mathfrak{X}$ (each copy constitutes a compact open bisection). This is an \'etale Hausdorff groupoid whose space of units is totally disconnected.

When $\mathfrak{X}=\mathfrak{X}_w$ is generated by $w\in \Sigma^{\mathbb{Z}}$ (namely, the shift-orbit of $w$ is dense in $\mathfrak{X}$) we denote the resulting groupoid by $\mathfrak{G}_w$. 

When $w$ is uniformly recurrent and aperiodic then $\mathfrak{X}_w$ is minimal (i.e.~has no non-empty proper shift-invariant closed subset) and $F[\mathfrak{G}_w]$ is simple with an effective growth bound as follows from the next result.

\begin{thm}[{Theorem~1.2 from \cite{Nekr}}]
Suppose that $\mathfrak{X}_w$ is minimal and aperiodic. Then the algebra $F[\mathfrak{G}_w]$ is simple, and its growth satisfies:

$$C^{-1}n\cdot c_w(C^{-1}n) \leq \gamma_{F[\mathfrak{G}_w]}(n)\leq Cn\cdot c_w(Cn) $$
for some $C>1$.
\end{thm}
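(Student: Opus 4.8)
The plan is to recover this as the special case, for $\mathbb{Z}$-actions, of Nekrashevych's simplicity-and-growth theorem, and here is the route I would take. First I would observe that aperiodicity of $w$ makes the shift-orbit of $w$ infinite, so minimality of $\mathfrak{X}_w$ forces every orbit of $\mathbb{Z}\curvearrowright\mathfrak{X}_w$ to be infinite and the action to be free; consequently the groupoid of germs $\mathfrak{G}_w$ coincides with the transformation groupoid $\mathbb{Z}\ltimes\mathfrak{X}_w$, and $F[\mathfrak{G}_w]$ is the skew group algebra $C(\mathfrak{X}_w,F)\rtimes_\sigma\mathbb{Z}$, where $C(\mathfrak{X}_w,F)$ denotes the algebra of locally constant $F$-valued functions on the Cantor set $\mathfrak{X}_w$. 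I would then fix the canonical invertible element $t=\chi_{\{1\}\times\mathfrak{X}_w}$ implementing the shift (so $tft^{-1}=\sigma(f)$) and the orthogonal idempotents $P_a=\chi_{\{x:x_0=a\}}$ for $a\in\Sigma$, which satisfy $\sum_{a}P_a=1$. Since the characteristic function of any compact open bisection has the form $t^k\chi_U$ with $U$ a finite union of shifted cylinders $\sigma^{-j}[u]$, and since $\chi_{[u]}=t^{-1}P_{u_1}t^{-1}P_{u_2}\cdots t^{-1}P_{u_m}t^{m}$ telescopes into a word in the $P_a$'s and $t^{\pm1}$, these elements generate $F[\mathfrak{G}_w]$.

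For simplicity I would run the classical crossed-product argument. Given a nonzero two-sided ideal $J$, pick $0\ne a\in J$ of minimal length, meaning with the fewest nonzero components in its expansion $a=\sum_{m}f_m t^m$, $f_m\in C(\mathfrak{X}_w,F)$. If this length exceeds $1$, then using freeness of the action one can choose $g\in C(\mathfrak{X}_w,F)$ which distinguishes $\sigma^m$ from the identity on part of $\mathrm{supp}(f_m)$ for some nonzero $m$ in the support of $a$; then $ga-ag\in J$ is nonzero and of strictly smaller length, a contradiction. Hence $J$ contains $ft^m$ with $f\ne0$, so after multiplying by $t^{-m}$ and by an idempotent isolating one value of $f$ it contains $\chi_U$ for a nonempty clopen $U$. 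By minimality the $\mathbb{Z}$-translates of $U$ cover $\mathfrak{X}_w$; extracting a finite subcover and using $\chi_{\sigma^{-n_i}U}=t^{-n_i}\chi_U t^{n_i}\in J$, a telescoping sum of characteristic functions produces $1\in J$, so $J=F[\mathfrak{G}_w]$ and the algebra is simple.

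For the growth estimate I would work with the explicit generating subspace $V=\Span\big(\{P_a:a\in\Sigma\}\cup\{t,t^{-1}\}\big)$; since $1\in V$ we have $\gamma_{F[\mathfrak{G}_w]}(n)=\dim_F V^n$. Pushing every $t^{\pm1}$ to the left in a product of at most $n$ generators turns it into $t^k$ times a product of characteristic functions of cylinders located at coordinates in $[-n,n]$, i.e.\ into $t^k$ times the characteristic function of a cylinder on a window of width at most $2n+1$, with $|k|\le n$; as the number of length-$\ell$ patterns occurring in $\mathfrak{X}_w$ is exactly $c_w(\ell)$, this yields $\dim_F V^n\le(2n+1)\,c_w(2n+1)\le Cn\,c_w(Cn)$. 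Conversely, for a length-$m$ factor $u$ of $w$ and $|k|\le m$ the element $t^k\chi_{[u]}$ lies in $V^{4m}$ by the telescoping identity above, and for fixed $m$ these elements are linearly independent — distinct $k$ are supported on distinct copies of $\mathfrak{X}_w$ inside $\mathfrak{G}_w$, and distinct length-$m$ factors give disjoint cylinders — so $\dim_F V^{4m}\ge(2m+1)\,c_w(m)$, whence $\dim_F V^n\ge C^{-1}n\,c_w(C^{-1}n)$ after adjusting constants and invoking monotonicity. I expect the main obstacle to be the length-reduction step in the simplicity argument, namely extracting the separating function $g$ from topological freeness of the shift, together with the coordinate bookkeeping for window widths in the growth bound; both points are classical and are carried out in detail in \cite{Nekr}.
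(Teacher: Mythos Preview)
The paper does not contain its own proof of this statement: it is quoted verbatim as Theorem~1.2 of \cite{Nekr} and used as a black box, so there is nothing in the paper to compare your argument against. Your sketch is, however, exactly the standard route one takes for crossed products by $\mathbb{Z}$ over a totally disconnected space, and it lines up with Nekrashevych's treatment: the identification $F[\mathfrak{G}_w]\cong C(\mathfrak{X}_w,F)\rtimes_\sigma\mathbb{Z}$ once the action is seen to be free, the length-reduction simplicity argument via topological freeness plus minimality, and the two-sided growth bound by counting cylinders on a window of width $O(n)$ together with the shift exponent $k$. The only place I would tighten your write-up is the length-reduction step: as written, $ga-ag=\sum_m f_m\bigl(g-\sigma^{-m}(g)\bigr)t^m$ kills the $m=0$ coefficient, and you need to argue that the resulting element is still nonzero (choose $g$ so that $g-\sigma^{-m}(g)$ does not vanish on $\operatorname{supp} f_m$ for a specific $m\neq 0$, using freeness); this is routine but deserves a sentence. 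The growth bookkeeping is fine once one notes that a product of $n$ generators, after pushing all $t^{\pm1}$ to one side, is supported on a single slice $\{k\}\times\mathfrak{X}_w$ with $|k|\le n$ and is a scalar multiple of the indicator of a cylinder on coordinates in $[-n,n]$.
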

(Here $c_w(n)$ is the complexity function of $w$ discussed in Section \ref{sec:mon}.)
From now on, we further restrict to the case of s subshift of $\{0,1\}^{\mathbb{Z}}$ generated by some uniformly recurrent word $w$, and again denote its closure by $\mathfrak{X}_{w}$. Then: $$F[\mathfrak{G}_{w}]=F[D_x,D_y,T^{\pm 1}]$$ is generated by the characteristic functions of the cylindrical sets $$\{u\in \mathfrak{X}_{w}|\ u(0)=0\},\ \{u\in \mathfrak{X}_{w}|\ u(0)=1\}$$ associated with $0$ and $1$ (namely, $D_x$ and $D_y$, respectively) and the characteristic functions of the sets of germs of the shift and its inverse, $T^{\pm 1}$. Notice that we have a ring homomorphism: $$\phi:A_{w}\rightarrow F[\mathfrak{G}_{w}]$$ given by: $$\phi(x)=D_xT,\ \phi(y)=D_yT.$$ Since $w$ is uniformly recurrent, $A_{w}$ is a just-infinite algebra and $\phi$ is an injective homomorphism.

This example is discussed in \cite{Nekr}, where the following infinite matrix representation of $F[\mathfrak{G}_w]$ is given. While we will not be using this representation directly, we find it useful in order to understand the flavor of this algebra:

$$   D_x = 
\left(
\begin{array}{ccccc}
     \ddots & \ddots & & &  \\
     \ddots & w[-1] & 0 & 0 & \\
     & 0 & w[0] & 0 & \\
     & 0 & 0 & w[1] & \ddots \\
     & & & \ddots & \ddots
\end{array}
\right),\ \ \ \ \ D_y = 
\left(
\begin{array}{ccccc}
     \ddots & \ddots & & &  \\
     \ddots & 1-w[-1] & 0 & 0 & \\
     & 0 & 1-w[0] & 0 & \\
     & 0 & 0 & 1-w[1] & \ddots \\
     & & & \ddots & \ddots
\end{array}
\right) $$

and:

$$   T = 
\left(
\begin{array}{llllll}
     \ddots & \ddots & \ddots & \phantom{\ddots} &  \\
      & 0 & 1 & 0 & \phantom{\ddots} \\
      & & 0 & 1 & 0 &  \phantom{\ddots}\\
      &  &  & 0 & 1 & \ddots \\
      & & & & 0 & \ddots \\
      & & & & & \ddots
\end{array}
\right)$$
We need the following result on monomial algebras in the next section, and take advantage of the above discussion to provide a proof from the point of view of convolution algebras:

\begin{lem}
\label{trivial center}

Let $w\in \Sigma^{\mathbb{Z}}$ be a uniformly recurrent, aperiodic bi-infinite word. Let $A_w$ be its associated monomial algebra over $F$. Then $Z(A_w)=F$

\end{lem}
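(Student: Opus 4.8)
The plan is to show $Z(A_w)=F$ by working inside the convolution algebra $F[\mathfrak{G}_w]$ via the injective homomorphism $\phi:A_w\hookrightarrow F[\mathfrak{G}_w]$ described above, or else to argue directly with monomials; I will describe the direct monomial approach, which I expect to be cleanest. Since $A_w$ carries the length grading $A_w=\bigoplus_{n\ge 0}A_w(n)$ and a central element decomposes into homogeneous components each of which is again central, it suffices to show that the only homogeneous central elements lie in degree $0$, i.e.\ that $Z(A_w)\cap A_w(n)=0$ for all $n>0$.

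So suppose $0\neq z\in A_w(n)$ with $n>0$ is central; write $z=\sum_{u} \lambda_u u$ where $u$ ranges over length-$n$ factors of $w$ and $\lambda_u\in F$. The key idea is that $w$ is aperiodic and uniformly recurrent, so one can separate the monomials appearing in $z$ by left/right multiplication by suitable letters. First I would use centrality against single letters: for each letter $a\in\Sigma$, $az=za$ in $A_w$. Expanding, $az=\sum_u\lambda_u\,(au)$ where $au$ survives iff $au$ is a factor of $w$, and similarly $za=\sum_u\lambda_u\,(ua)$. Comparing coefficients of a fixed length-$(n+1)$ monomial $v$: if $v=au'$ (so $u'$ is obtained by deleting the first letter) then the coefficient on the left is $\lambda_{u'}$ (if $u'$ is a factor and $au'$ is a factor), and on the right it is $\lambda_{u''}$ where $v=u''a$, i.e.\ $u''$ is $v$ with its last letter removed. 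This forces a relation $\lambda_{u'}=\lambda_{u''}$ whenever $au'=u''a$ is a factor of $w$.

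The heart of the argument is to iterate this and exploit aperiodicity. Pick a length-$n$ factor $u_0$ with $\lambda_{u_0}\neq 0$. Since $w$ is recurrent (indeed uniformly recurrent), $u_0$ occurs infinitely often, and one can find occurrences of $u_0$ in $w$ that are followed and preceded by arbitrarily long blocks; concretely, by Lemma~\ref{auxiliary} we may pass to a bi-infinite word $u\in\Sigma^{\mathbb Z}$ with the same factors and argue there. The relation above, applied repeatedly along a long factor of $w$ of the form $a_1a_2\cdots a_m u_0 b_1\cdots b_m$, propagates the coefficient $\lambda_{u_0}$ to the coefficients of all length-$n$ windows sliding across this factor — so \emph{all} length-$n$ factors of $w$ get the \emph{same} nonzero coefficient. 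Then centrality against the full generating letters, together with the fact that $w$ is infinite (hence $c_w(n+1)>c_w(n)$ is not needed, but some length-$n$ factor $u$ has two distinct letters $a,b$ with $au$ a factor and $ub$ a factor, etc.), yields a contradiction: one produces a length-$(n+1)$ monomial that is a factor and on which the two sides of $az=za$ give unequal coefficients (one side contributes $\lambda$, the other contributes $0$ because the corresponding length-$n$ word truncated from the opposite side is not a factor). Aperiodicity is exactly what guarantees such an asymmetric extension exists: if every one-letter extension on the left matched one on the right compatibly in this way for all monomials simultaneously, $w$ would be eventually periodic.

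\textbf{Main obstacle.} The delicate point is the last step: turning ``all length-$n$ factors share a common coefficient'' into a genuine contradiction with aperiodicity, rather than merely with minimality or recurrence. I expect the clean way is the groupoid viewpoint: identify $z$ with an element of $F[\mathfrak{G}_w]$ supported on the single copy of $\mathfrak{X}_w$ indexed by $t=n$ (since $\phi$ multiplies degree by shift, $\phi(z)$ lies in $D_\ast T^n$-type elements), note that a central element of $F[\mathfrak{G}_w]$ must commute with $T$ and with $D_x,D_y$, and then use that $F[\mathfrak{G}_w]$ is simple (so has center $F$ — a simple algebra over $F$ with $1$ has a field as center, and here the unit is the characteristic function of the unit space) to conclude $\phi(z)$ is central in $F[\mathfrak{G}_w]$ only if it is a scalar, forcing $n=0$. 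Care is needed because $F[\mathfrak{G}_w]$ is non-unital in general; but $A_w$ is just-infinite and $\phi$ injective, so one can instead observe directly that commuting with $D_xT$ and $D_yT$ inside $F[\mathfrak{G}_w]$ pins down the function on germs using aperiodicity of the base point, which is the translation of the combinatorial argument above into the groupoid language.
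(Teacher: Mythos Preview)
Your direct monomial argument is essentially correct and is \emph{not} the route the paper takes; however, your propagation step is slightly mis-stated and can be tightened. The relation you derive from $az=za$ for a letter $a$ and a length-$(n{+}1)$ factor $v$ is: if $v$ begins and ends with the \emph{same} letter $a$, then the length-$n$ prefix and suffix of $v$ have equal coefficients; if $v$ begins with $a$ and ends with $b\neq a$, then \emph{both} the prefix and the suffix have coefficient zero. So sliding the length-$n$ window along a long factor of $w$, the coefficient either stays constant (when $w_i=w_{i+n}$) or is forced to $0$ (when $w_i\neq w_{i+n}$). Aperiodicity supplies some $i$ with $w_i\neq w_{i+n}$, and from that point every window has coefficient $0$; uniform recurrence then makes every length-$n$ factor appear as such a window, so $z=0$. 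In other words, the contradiction comes directly out of the propagation---you never actually reach the situation ``all coefficients equal and nonzero,'' and your second step is unnecessary.

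The paper's proof is genuinely different and more indirect. It passes to an uncountable extension field $K\supseteq F$, embeds $A_w\otimes_F K$ into the simple algebra $K[\mathfrak{G}_w]$ via $\phi$, and observes that a homogeneous $f\in Z(A_w)\setminus F$ lands in $Z(K[\mathfrak{G}_w])$ (since the image of $\phi$ together with $T^{-1}$ generates, and $T$ already lies in the image). Simplicity makes $f-\lambda$ invertible for every $\lambda\in K$; since $K$ is uncountable and $\dim_K K[\mathfrak{G}_w]=\aleph_0$, the Amitsur trick forces $f$ to be algebraic over $K$, hence (being homogeneous of positive degree in the graded algebra $A_w\otimes_F K$) nilpotent---contradicting that a nonzero central element of a simple unital ring is a unit. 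Your groupoid sketch in the ``Main obstacle'' paragraph gestures at this but misses the key point: ``simple implies the center is a field'' does not by itself give center $=F$; the paper needs the uncountable-field/countable-dimension step to pin that down.

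In short: your combinatorial approach (once straightened out as above) is more elementary and self-contained, using only the factor language of $w$; the paper's approach leans on the convolution-algebra machinery already set up in Section~\ref{groupoids} and the Amitsur trick, which buys a slick indirect proof at the cost of introducing an auxiliary uncountable field.
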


\begin{proof}
Assume on the contrary that there exists some $f\in Z(A_w)\setminus F$. Since $A_w$ is graded, we may assume that $f$ is homogeneous of positive degree (recall that the center of a graded algebra is homogeneous).
Let $K\supseteq F$ be an arbitrary uncountable field extension; then $f\in Z(A_{w}\otimes_F K)$. Utilizing the injective map: $$\phi: A_w\otimes_F K\rightarrow K[\mathfrak{G}_w]$$ and observing that $K[\mathfrak{G}_w]$ is generated as a $K$-algebra by the image of $\phi$ together with $T^{-1}$  we obtain that $f\in Z(K[\mathfrak{G}_w])$ (notice that $T\in \phi(A_w\otimes_F K)$). But since $K[\mathfrak{G}_w]$ is simple, we have that $f-\lambda$ is invertible in $K[\mathfrak{G}_w]$ for each $\lambda\in K$. Since $K$ is uncountable and $\dim_K K[\mathfrak{G}_w]=\aleph_0$, we get that $f$ is algebraic therein, and hence also in $A_w\otimes_F K$. But since $A_w\otimes_F K$ is graded and $f$ is homogeneous, it follows that $f$ is nilpotent, contradicting its invertibility.
\end{proof}

\section{Growth of simple Lie algebras}

We are now ready to prove Theorem \ref{thm_lie}:

\begin{proof}[Proof of Theorem \ref{thm_lie}]
Let $f:\mathbb{N}\rightarrow \mathbb{N}$ be an increasing, submultiplicative function such that $f(n)\sim nf(n)$. Then by Theorem \ref{simple} there exists a uniformly recurrent word $w$ on some finite alphabet with $c_w(n)\sim f(n)$. By Lemma \ref{lem_monomial} there exists a uniformly recurrent word $w'$ on a two-letter alphabet with $c_{w'}(n)\sim f(n)$. Consider the monomial algebra $A_{w'}$. Then it satisfies the conditions of Lemma \ref{lemma commutator} (since $w'$ is uniformly recurrent the two letters do not have arbitrary powers which are factors, so $A_{w'}$ is generated by two nilpotent elements, and since $w'$ is uniformly recurrent and aperiodic then $Z(A_{w'})=F$ by Lemma \ref{trivial center}). But since $f(n)\sim nf(n)$, we have that:
$$f(n)\leq \gamma_{A_{w'}}(n)\leq nc_{w'}(n)\sim f(n)$$
and by Lemma \ref{lemma commutator}:
$$\gamma_{[A_{w'},A_{w'}]}(n)\sim \gamma_{A_{w'}}(n)\sim f(n).$$
Now observe that since $w'$ is recurrent, we may assume by Lemma \ref{auxiliary} it is \textit{bi-infinite}, namely, its hereditary language is the same as of a uniformly recurrent word in $\{0,1\}^{\mathbb{Z}}$; for simplicity we denote its bi-infinite `twin' by $w'$ as well. Now consider the subshift of $\{0,1\}^{\mathbb{Z}}$ generated by $w'$ and denote its closure by $\mathfrak{X}_{w'}$; this gives rise to an \'etale groupoid $\mathfrak{G}_{w'}$ with convolution algebra $F[\mathfrak{G}_{w'}]$, as done in Section \ref{groupoids}.
Recall from Section \ref{groupoids} that we have an injective ring homomorphism: $$\phi:A_{w'}\rightarrow F[\mathfrak{G}_{w'}]$$ given by: $$\phi(x)=D_xT,\ \phi(y)=D_yT$$
which reduces to an embedding of the associated Lie algebras: $$\tilde{\phi}:[A_{w'},A_{w'}]\cong\frac{[A_{w'},A_{w'}]}{Z(A_{w'})\cap [A_{w'},A_{w'}]}\longrightarrow \frac{[F[\mathfrak{G}_{w'}],F[\mathfrak{G}_{w'}]]}{Z(F[\mathfrak{G}_{w'}])\cap [F[\mathfrak{G}_{w'}],F[\mathfrak{G}_{w'}]]}$$
Denote the range of $\tilde{\phi}$ by $\g$. Since $w'$ is uniformly recurrent, it generates a minimal subshift and so $F[\mathfrak{G}_{w'}]$ is a simple $F$-algebra (by \cite[Theorem~1.2]{Nekr}). 

By Baxter's theorem \cite{Baxter}, $\g$ is a simple Lie algebra. 
We note that $\g$ is finitely generated. Indeed, by \cite[Theorem~1]{AAJZ} if $R$ is a finitely generated associative algebra with an idempotent $e\in R$ such that $ReR=R(1-e)R=R$ then $[R,R]$ is finitely generated as a Lie algebra. Since $F[\mathfrak{G}_{w'}]$ is simple and finitely generated, any idempotent $\neq 0,1$ satisfies the required property; in particular, we can take $e=D_x$.
Now evidently: $$\gamma_\g(n)\preceq \gamma_{F[\mathfrak{G}_{w'}]}(n)\sim nc_{w'}(n)\sim f(n)$$
(the middle inequality follows from the growth statement in \cite[Theorem~1.2]{Nekr}) and since $\tilde{\phi}$ is injective and its domain is equal to $[A_{w'},A_{w'}]$ at most up to a $1$-dimensional subspace then: $$f(n)\sim \gamma_{[A_{w'},A_{w'}]}(n)\preceq \gamma_\g(n),$$
and the claim follows.
\end{proof}

\section{Remarks and applications} \label{sec:3}

Let $\g$ be a finitely generated Lie algebra with $L_i$ monomials of length $i$ modulo monomials of length shorter than $i$. Smith \cite{Smith} established the connection between the growth of $\g$ and the growth of its universal enveloping algebra $U(\g)$ by the following formula:

$$\sum_{n=0}^\infty \lambda_U(n)t^n = \prod_{i=1}^{\infty} \frac{1}{(1-t^i)^{L_i}}$$

where $\lambda_U(n)=\gamma_{U(\g)}(n)-\gamma_{U(\g)}(n-1)$ is the discrete derivative of the growth function of $U(\g)$. Using this, she was able to construct universal enveloping algebras with intermediate growth rate.

Petrogradsky carried this connection several steps further: in \cite{Petrogradsky96,Petrogradsky00,Petrogradsky93}, he studied the following hierarchy of functions for $q=1,2,3,\dots$ and $\alpha\in \mathbb{R}^{+}$:
\begin{eqnarray*}
\Phi_\alpha^{1}(n) & = & \alpha,\\
\Phi_\alpha^{2}(n) & = & n^\alpha,\\
\Phi_\alpha^{3}(n) & = & \exp(n^{\alpha/{(\alpha+1)}}),\\
\Phi_\alpha^{q}(n) &=& \exp\left(\frac{n}{(\ln^{(q-3)} n)^{1/\alpha}}\right)\ \ \ \  \text{      for } q=4,5,\dots
\end{eqnarray*}
Where $\ln^{(1)} n=\ln n,\ \ln^{(q+1)} n=\ln(\ln^{(q)} n)$. Petrogradsky proved interesting connections between the growth of a finitely generated Lie algebra and the growth of its universal enveloping algebra in terms of the above hierarchy.
He defined the following notions of dimensions for growth functions (called \textit{$q$-dimensions}):
\begin{eqnarray*}
\Dim^q f(n) &= & \inf\{\alpha\in\mathbb{R}^{+}|f(n)\leq \Phi_\alpha^q(n)\ \forall n\gg 1\}, \\
\Dimsup^q f(n) &= & \sup\{\alpha\in\mathbb{R}^{+}|f(n)\geq \Phi_\alpha^q(n)\ \forall n\gg 1\}.
\end{eqnarray*}
Petrogradsky constructed Lie and associative algebras of interesting growth types, including non-integral $q$-dimensions; he then left as an open problem \cite[Remark in page 347]{Petrogradsky00} and \cite[Remark in page 651]{Petrogradsky97} whether for $q=3,4,\dots$ every $\sigma\in (0,1)$ is realizable as $\Dim^q(\gamma_\g(n))=\Dimsup^q(\gamma_\g(n))=\sigma$ for some finitely generated Lie algebra.

Using Theorem \ref{thm_lie} we are now able to answer this question in the affirmative.

\begin{cor} \label{answer Petrogradsky1}
For any $q=3,4,\dots$ and $\sigma\in(0,1)$ there exists a finitely generated Lie algebra $\g$ such that $\Dim^q(\gamma_\g(n))=\Dimsup^q(\gamma_\g(n))=\sigma$.
\end{cor}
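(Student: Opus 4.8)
The plan is to apply Theorem~\ref{thm_lie} to a function $f$ carefully chosen so that its $q$-dimension equals the prescribed $\sigma$. The natural candidate is $f(n) = \Phi_\sigma^q(n)$ itself (rounded to integer values), so I must verify that this function is increasing, submultiplicative, and satisfies $f(n)\sim nf(n)$; then Theorem~\ref{thm_lie} produces a finitely generated simple Lie algebra $\g$ with $\gamma_\g(n)\sim f(n)$, and it remains to check that the equivalence relation $\sim$ does not destroy the value of the $q$-dimension.

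First I would record the elementary growth properties of $\Phi_\sigma^q$. For $q\geq 3$ these functions grow at least like $\exp(n^{\sigma/(\sigma+1)})$ (for $q=3$) or faster, so in particular they dominate $n^{\ln n}$; for such rapidly-growing regular functions the relation $f(n)\sim nf(n)$ is automatic (multiplying the argument's ``effective exponent'' by a constant absorbs a factor of $n$), and this is exactly the regime the remark after Theorem~\ref{thm_lie} alludes to. Submultiplicativity of $\Phi_\sigma^q$ is a direct computation: $\exp(g(n))$ is submultiplicative whenever $g(m+n)\leq g(m)+g(n)$, and the exponents $n^{\sigma/(\sigma+1)}$ and $n/(\ln^{(q-3)}n)^{1/\sigma}$ are (eventually) concave, hence subadditive. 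One can either adjust $f$ on a finite initial segment or replace it by an equivalent honestly-submultiplicative function, as is done for $f'$ in the proof of Theorem~\ref{simple}; the equivalence class, and in particular the $q$-dimension, is unaffected.

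Next I would invoke Theorem~\ref{thm_lie} to obtain a finitely generated (simple) Lie algebra $\g$ with $\gamma_\g(n)\sim f(n) = \Phi_\sigma^q(n)$, and then argue that $\Dim^q$ and $\Dimsup^q$ are invariants of the equivalence class. This is the crux of the argument: I need that if $h\sim \Phi_\sigma^q$ then $\Dim^q(h)=\Dimsup^q(h)=\sigma$. The point is that the family $\{\Phi_\alpha^q\}_{\alpha>0}$ is, for fixed $q\geq 3$, ``coarsely totally ordered and $\sim$-stable'' in the following sense: for $\alpha<\sigma<\beta$ one has $\Phi_\beta^q(Dn)\cdot C = o(\Phi_\sigma^q(n))^{-1}\cdot$(something) — more precisely, $C\,\Phi_\alpha^q(Dn)\leq \Phi_\sigma^q(n)$ and $\Phi_\sigma^q(n)\leq C\,\Phi_\beta^q(Dn)$ for all $n\gg 1$ and any constants $C,D$, because changing $\alpha$ strictly changes the leading asymptotics of $\ln\Phi^q$ by more than any bounded multiplicative/argument rescaling can compensate (for $q=3$, $\ln\Phi_\alpha^3(n)=n^{\alpha/(\alpha+1)}$ and the exponents $\alpha/(\alpha+1)$ are distinct; for $q\geq 4$, $\ln\Phi_\alpha^q(n)=n/(\ln^{(q-3)}n)^{1/\alpha}$ and the powers of the iterated logarithm differ). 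Hence $h\preceq \Phi_\sigma^q$ forces $h\leq \Phi_\beta^q$ eventually for every $\beta>\sigma$, giving $\Dim^q(h)\leq\sigma$, and $\Phi_\sigma^q\preceq h$ forces $h\geq \Phi_\alpha^q$ eventually for every $\alpha<\sigma$, giving $\Dimsup^q(h)\geq \sigma$; combined with the trivial inequality $\Dim^q\geq\Dimsup^q$ restricted to the class, all four quantities coincide with $\sigma$.

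The main obstacle I anticipate is purely bookkeeping in this last step: one must check the ``$\sim$-stability'' inequalities $C\,\Phi_\alpha^q(Dn)\leq \Phi_\sigma^q(n)\leq C\,\Phi_\beta^q(Dn)$ uniformly, which amounts to comparing $\ln C + \ln\Phi_\alpha^q(Dn)$ with $\ln\Phi_\sigma^q(n)$ — i.e., checking that $(Dn)^{\alpha/(\alpha+1)} + \ln C \leq n^{\sigma/(\sigma+1)}$ eventually (immediate since $\alpha/(\alpha+1)<\sigma/(\sigma+1)$), and analogously $\frac{Dn}{(\ln^{(q-3)}(Dn))^{1/\alpha}} + \ln C \leq \frac{n}{(\ln^{(q-3)}n)^{1/\sigma}}$ eventually for $q\geq 4$, which holds because $\ln^{(q-3)}(Dn)\sim \ln^{(q-3)}n$ (iterated logs are insensitive to constant rescaling of the argument) while $1/\alpha>1/\sigma$ makes the denominator on the left genuinely larger. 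These are routine but need to be stated; everything else follows formally. I would phrase the write-up as: choose $f$; verify the hypotheses of Theorem~\ref{thm_lie}; apply it; then quote (or prove in a sentence) that $\Dim^q,\Dimsup^q$ depend only on the $\sim$-class and evaluate to $\sigma$ on $\Phi_\sigma^q$.
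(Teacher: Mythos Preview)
Your proposal is correct and follows essentially the same route as the paper: take $f=\lceil\Phi_\sigma^q\rceil$, verify the hypotheses of Theorem~\ref{thm_lie} (increasing, submultiplicative, $f(n)\sim nf(n)$), apply the theorem, and read off the $q$-dimensions. The paper is terser than you are---it gives explicit one-line inequalities showing $\varphi_\sigma^q(2n)\geq n\varphi_\sigma^q(n)$ for $q=3$ and $q\geq 4$ separately, then simply asserts $\Dim^q(\gamma_\g)=\Dimsup^q(\gamma_\g)=\sigma$ without further comment---whereas you spell out the $\sim$-invariance of the $q$-dimensions, which the paper takes for granted; that extra paragraph is not wasted, since it is the only place where something could conceivably go wrong.
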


\begin{proof}
Fix $q,\sigma$. Let $\varphi_\sigma^q(n)=\lceil \Phi_\sigma^q(n)\rceil$. It is straightforward to check that $\varphi_\sigma^q$ is increasing (at least for $n\gg 1$) and submultiplicative. In addition, for $q\geq 4$: $$\varphi_\sigma^q(2n)\geq \exp\left(\frac{n}{(\ln^{(q-3)} n)^{1/\sigma}}+\sqrt{n}\right)=\exp(\sqrt{n})\Phi_\sigma^q(n)\geq n\varphi_\sigma^q(n)$$
and for $q=3$:
$$\varphi_\sigma^3(2n)\geq \exp\left(2^{\sigma/{(\sigma+1)}}n^{\sigma/{(\sigma+1)}}\right)\geq 2^{\frac{\sigma}{{(\sigma+1)}}n^{\sigma/{(\sigma+1)}}}\Phi_\sigma^3(n)\geq n\varphi_\sigma^3(n)$$
for all $n\gg 1$.
Therefore, by Theorem \ref{thm_lie} there exists a finitely generated simple Lie algebra $\g$ whose growth rate is $\gamma_\g(n)\sim \varphi_\sigma^q(n)$. Therefore $\Dim^q(\gamma_\g(n))=\Dimsup^q(\gamma_\g(n))=\sigma$.
\end{proof}

In \cite{Petrogradsky96}, Petrogradsky notes that there exist functions which `lie between' the layers of $q$-dimensions: for instance, he notes that $f(n)=n^{\ln n}$ has that $\Dim^2(f)=\infty$ but $\Dim^3(f)=0$, and leaves open the question whether there exist algebras whose growth functions lie between layers (see page 464, there). Since $f(n)=n^{\ln n}$ is easily seen to satisfy the conditions of Theorem \ref{thm_lie}, it is equivalent to the growth rate of a finitely generated (even simple) Lie algebra, settling this question in the affirmative. In fact:

\begin{cor} \label{answer Petrogradsky2}
For any $q\geq 2$ there exists a finitely generated simple Lie algebra $\g$ such that $\Dim^q(\gamma_{\g}(n))=\infty$ but $\Dim^{q+1}(\gamma_{\g}(n))=0$.
\end{cor}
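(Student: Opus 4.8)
The plan is to exhibit, for each $q\geq 2$, an increasing, submultiplicative function $f$ satisfying $f(n)\sim nf(n)$ such that $\Dim^q(f)=\infty$ while $\Dim^{q+1}(f)=0$; Theorem \ref{thm_lie} then produces a finitely generated simple Lie algebra $\g$ with $\gamma_\g(n)\sim f(n)$, and since the $q$-dimensions are invariants of the equivalence class under $\sim$ (this is part of Petrogradsky's setup), we get $\Dim^q(\gamma_\g(n))=\infty$ and $\Dim^{q+1}(\gamma_\g(n))=0$, as desired. So the entire task reduces to writing down a suitable $f$ for each $q$.

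For $q=2$ the function $f(n)=n^{\ln n}=\exp((\ln n)^2)$ already does the job, exactly as noted in the paragraph preceding the corollary: it is increasing and submultiplicative (since $\exp((\ln mn)^2)=\exp((\ln m+\ln n)^2)\geq \exp((\ln m)^2)\exp((\ln n)^2)$ is false in general — one must instead check $(\ln 2n)^2\leq 2(\ln n)^2$ type bounds; more safely, submultiplicativity of $\exp((\ln n)^2)$ follows from $(\ln m + \ln n)^2 \le (\ln m)^2 + (\ln n)^2 + 2\ln m\ln n$, so one should rather use that $f(mn)/(f(m)f(n)) = \exp(2\ln m \ln n) \ge 1$, showing it is in fact \emph{super}multiplicative — hence we instead verify the weaker submultiplicativity-type hypothesis actually needed, or just note $n^{\ln n}$ is equivalent to a genuinely submultiplicative function), it satisfies $f(n)\sim nf(n)$ since $nf(n)=\exp(\ln n+(\ln n)^2)$ and $\ln n+(\ln n)^2 = (\ln n)^2(1+o(1))$ forces $f(n)\leq n f(n)\leq f(n^{1+\epsilon})$, and $\Dim^2(f)=\infty$ (as $n^{\ln n}\not\leq n^\alpha$ for any fixed $\alpha$) while $\Dim^3(f)=0$ (as $n^{\ln n}=\exp((\ln n)^2)\leq \exp(n^{\beta})$ for every $\beta>0$ eventually). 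For general $q\geq 2$ one iterates: take
\[
f_q(n)=\exp\!\left(\frac{n}{\ln^{(q-2)}n}\right)\qquad\text{for }q\geq 3,
\]
and $f_2(n)=n^{\ln n}$. One then checks (i) $f_q$ is increasing and submultiplicative for $n\gg1$ (handled by the standard estimates on $\ln^{(k)}$, as in the proof of Corollary \ref{answer Petrogradsky1}); (ii) $f_q(n)\sim nf_q(n)$, because $f_q(2n)\geq \exp\!\big(\tfrac{2n}{\ln^{(q-2)}(2n)}\big)\geq \exp\!\big(\tfrac{n}{\ln^{(q-2)}n}+\sqrt n\big)=\exp(\sqrt n)f_q(n)\geq nf_q(n)$ for $n\gg1$, the same trick used in Corollary \ref{answer Petrogradsky1}; (iii) $\Dim^q(f_q)=\infty$, since $\Phi_\alpha^q(n)=\exp\!\big(\tfrac{n}{(\ln^{(q-3)}n)^{1/\alpha}}\big)$ (for $q\geq4$) or $\exp(n^{\alpha/(\alpha+1)})$ (for $q=3$) or $n^\alpha$ (for $q=2$) is dominated by $f_q$ for every fixed $\alpha$, because $\ln^{(q-2)}n = o\big((\ln^{(q-3)}n)^{1/\alpha}\big)$ and $o(n^{1-\alpha/(\alpha+1)})$ and so on — i.e. $f_q$ grows strictly faster than every member of the $q$-th layer; and (iv) $\Dim^{q+1}(f_q)=0$, since $f_q(n)\leq \Phi_\alpha^{q+1}(n)=\exp\!\big(\tfrac{n}{(\ln^{(q-2)}n)^{1/\alpha}}\big)$ for \emph{every} $\alpha>0$ once $n$ is large, because $\tfrac{1}{\ln^{(q-2)}n}\leq \tfrac{1}{(\ln^{(q-2)}n)^{1/\alpha}}$ for $\alpha\leq1$ and for $\alpha>1$ the exponent comparison still holds eventually — so $f_q$ is below the entire $(q+1)$-st layer. (The $q=2$ case is covered by $f_2=n^{\ln n}$ exactly as above; the $q=3$ boundary case of (iii), comparing with $\exp(n^{\alpha/(\alpha+1)})$, is immediate since $\tfrac{n}{\ln n}$ beats $n^{\theta}$ for every $\theta<1$.)

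I do not expect a genuine obstacle here; the corollary is essentially a packaging of Theorem \ref{thm_lie} together with a one-line choice of witness function. The only point requiring a little care is the verification of submultiplicativity of the chosen $f_q$ (or, if that fails literally, replacing $f_q$ by an equivalent genuinely submultiplicative function — e.g. $\lceil f_q\rceil$ with a mild smoothing, or noting that the hypotheses of Theorem \ref{thm_lie} are $\sim$-invariant after passing through Theorem \ref{simple} and the modification producing $f'$), and the bookkeeping of which layer $\Phi^q_\alpha$ sits in for the boundary cases $q=2,3$. All of these are routine manipulations with iterated logarithms of the same flavor already carried out in the proof of Corollary \ref{answer Petrogradsky1}, so the write-up should be short.

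\begin{proof}
It suffices, by Theorem \ref{thm_lie} and the fact that the $q$-dimensions $\Dim^q,\Dimsup^q$ depend only on the $\sim$-equivalence class of a function, to produce for each $q\geq2$ an increasing submultiplicative function $f_q:\mathbb{N}\to\mathbb{N}$ with $f_q(n)\sim nf_q(n)$, $\Dim^q(f_q)=\infty$ and $\Dim^{q+1}(f_q)=0$.

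For $q=2$ take $f_2(n)=\lceil n^{\ln n}\rceil=\lceil\exp((\ln n)^2)\rceil$. For $q\geq3$ take
\[
f_q(n)=\left\lceil\exp\!\left(\frac{n}{\ln^{(q-2)}n}\right)\right\rceil,
\]
defined for $n$ large enough that $\ln^{(q-2)}n>1$ (and extended arbitrarily, increasingly, for small $n$). Each $f_q$ is increasing and submultiplicative for $n\gg1$: for $q\geq3$ this follows from the elementary estimate $\ln^{(q-2)}(mn)\geq\max(\ln^{(q-2)}m,\ln^{(q-2)}n)$ together with $\tfrac{mn}{\ln^{(q-2)}(mn)}$ being compared to $\tfrac{m}{\ln^{(q-2)}m}+\tfrac{n}{\ln^{(q-2)}n}$, exactly as in Corollary \ref{answer Petrogradsky1}; for $q=2$, $\exp((\ln n)^2)$ is even submultiplicative up to the obvious bound on $(\ln 2n)^2$ (replacing $f_2$ by an equivalent genuinely submultiplicative function if necessary, which does not affect any $q$-dimension).

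Next, $f_q(n)\sim nf_q(n)$. For $q\geq3$ we have, for $n\gg1$,
\[
f_q(2n)\;\geq\;\exp\!\left(\frac{2n}{\ln^{(q-2)}(2n)}\right)\;\geq\;\exp\!\left(\frac{n}{\ln^{(q-2)}n}+\sqrt n\right)\;=\;\exp(\sqrt n)\,\exp\!\left(\frac{n}{\ln^{(q-2)}n}\right)\;\geq\;n\,f_q(n),
\]
so $nf_q(n)\leq f_q(2n)\preceq f_q(n)$, and the reverse $f_q(n)\preceq nf_q(n)$ is trivial; the case $q=2$ is identical with $n^{\ln n}$ in place of $\exp(n/\ln^{(q-2)}n)$.

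Finally we compare $f_q$ with the layers. For the lower layer, $\Dim^q(f_q)=\infty$: the $q$-th layer function $\Phi_\alpha^q(n)$ equals $n^\alpha$ ($q=2$), $\exp(n^{\alpha/(\alpha+1)})$ ($q=3$), or $\exp\!\big(n/(\ln^{(q-3)}n)^{1/\alpha}\big)$ ($q\geq4$), and in every case $\log\Phi_\alpha^q(n)=o(\log f_q(n))$ for each fixed $\alpha$ — indeed $(\ln n)^2$ beats $\alpha\ln n$; $n/\ln n$ beats $n^{\alpha/(\alpha+1)}$; and $n/\ln^{(q-2)}n$ beats $n/(\ln^{(q-3)}n)^{1/\alpha}$ since $\ln^{(q-2)}n=o\big((\ln^{(q-3)}n)^{1/\alpha}\big)$ — so $f_q(n)\geq\Phi_\alpha^q(n)$ for all $n\gg1$ and all $\alpha$, giving $\Dim^q(f_q)=\infty$. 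For the upper layer, $\Dim^{q+1}(f_q)=0$: the $(q{+}1)$-st layer function is $\Phi_\alpha^{q+1}(n)=\exp\!\big(n/(\ln^{(q-2)}n)^{1/\alpha}\big)$ for $q\geq3$ (and $\exp(n^{\alpha/(\alpha+1)})$ for $q=2$), and for every $\alpha>0$ we have $n/\ln^{(q-2)}n\leq n/(\ln^{(q-2)}n)^{1/\alpha}$ once $\ln^{(q-2)}n\geq1$ when $\alpha\leq1$, while for $\alpha>1$ the inequality $\log f_q(n)\leq\log\Phi_\alpha^{q+1}(n)$ still holds for all $n\gg1$ since both sides are $n/\mathrm{(slowly\ growing)}$ and $(\ln^{(q-2)}n)^{1/\alpha}=o(\ln^{(q-2)}n)$; for $q=2$, $(\ln n)^2=o(n^{\alpha/(\alpha+1)})$ for every $\alpha>0$. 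Hence $f_q(n)\leq\Phi_\alpha^{q+1}(n)$ for all $n\gg1$ and all $\alpha>0$, so $\Dim^{q+1}(f_q)=0$.

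Applying Theorem \ref{thm_lie} to $f_q$ produces a finitely generated simple Lie algebra $\g$ with $\gamma_\g(n)\sim f_q(n)$, whence $\Dim^q(\gamma_\g(n))=\infty$ and $\Dim^{q+1}(\gamma_\g(n))=0$.
\end{proof}
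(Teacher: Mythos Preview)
Your overall plan is right, and the $q=2$ case (with $n^{\ln n}$) matches the paper. The problem is with your witness function for $q\geq 3$: the choice $f_q(n)=\exp\!\big(n/\ln^{(q-2)}n\big)$ does \emph{not} have $\Dim^{q+1}(f_q)=0$. In fact $f_q$ is literally $\Phi_1^{q+1}$, so $\Dim^{q+1}(f_q)=\Dimsup^{q+1}(f_q)=1$. Concretely, your inequality ``$n/\ln^{(q-2)}n\leq n/(\ln^{(q-2)}n)^{1/\alpha}$ once $\ln^{(q-2)}n\geq 1$ when $\alpha\leq 1$'' is backwards: for $\alpha\leq 1$ one has $1/\alpha\geq 1$, hence $(\ln^{(q-2)}n)^{1/\alpha}\geq \ln^{(q-2)}n$ and therefore $n/(\ln^{(q-2)}n)^{1/\alpha}\leq n/\ln^{(q-2)}n$, i.e.\ $\Phi_\alpha^{q+1}(n)\leq f_q(n)$. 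So for every $\alpha<1$ your $f_q$ eventually dominates $\Phi_\alpha^{q+1}$, and the infimum defining $\Dim^{q+1}(f_q)$ cannot be $0$.

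The fix is exactly what the paper does: replace the bare $\ln^{(q-2)}n$ in the denominator by a power whose exponent tends to infinity, taking
\[
f_q(n)=\exp\!\left(\frac{n}{(\ln^{(q-2)}n)^{\ln n}}\right).
\]
Then for every $\alpha>0$ one eventually has $\ln n>1/\alpha$, hence $(\ln^{(q-2)}n)^{\ln n}\geq (\ln^{(q-2)}n)^{1/\alpha}$, giving $f_q(n)\leq \Phi_\alpha^{q+1}(n)$ and thus $\Dim^{q+1}(f_q)=0$; the check that $\Dim^q(f_q)=\infty$ and the growth/submultiplicativity verifications proceed as in Corollary~\ref{answer Petrogradsky1}. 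Your argument for $\Dim^q(f_q)=\infty$ with your simpler $f_q$ was correct, but that function lands squarely inside the $(q{+}1)$-st layer rather than below it.
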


\begin{proof}
For $q=2$ take an algebra whose growth rate is $\sim n^{\ln n}$, as mentioned in the above paragraph. For $q\geq 3$, consider the function: $$f_q(n)=\exp\left(\frac{n}{(\ln^{(q-2)} n)^{\ln n}}\right)$$
(formally one takes $\lceil f_q(n)\rceil$.) It is straightforward to check that $\Dim^q(f_q(n))=\infty$ but $\Dim^{q+1}(f_q(n))=0$. In addition, it is easy to verify that $f_q(n)$ is increasing and submultiplicative (at least for $n\gg 1$), and also $f_q(2n)\geq nf_q(n)$ for $n\gg 1$. Thus by Theorem \ref{thm_lie} there exists a finitely generated simple Lie algebra $\g$ whose growth rate is $\gamma_{\g}(n)\sim f_q(n)$, proving the claim.
\end{proof}
(Note that since our Lie algebras originate from associateive algebras (namely, $F[\mathfrak{G}_w]$), the same functions are realizable as growth functions of associative algebras.)

We conclude with a remark on a question posed by B.~Plotkin. In \cite[Problem~6]{Plotkin}, he asked if there exists a continuum of non-isomorphic finitely generated simple Lie algebras over a fixed base field. It should be mentioned that in \cite{LichtPass}, Lichtman and Passman answered the analogous question for associative algebras, assuming the base field either contains enough roots of the unity or is countable. Bokut, Chen and Mo proved \cite[Theorem~4.6]{BCM} that an arbitrary countably generated Lie algebra over a countable field is embeddable into a $2$-generated simple Lie algebra (in particular, this answers Plotkin's question on Lie algebras for countable base fields). We remark that \cite[Example~4.3]{Nekr} gives another answer to Plotkin's question for associative algebras (without any assumption on the base field), and using the arguments in Theorem \ref{thm_lie}, to Plotkin's question for Lie algebras over an arbitrary field.

\end{document}

In \cite{Smith}, Smith constructed a finitely generated domain with intermediate growth rate. Her construction arises as $U(\mathfrak{A})$, the universal enveloping algebra of an infinite dimensional Lie algebra $\mathfrak{A}$ and the resulting growth is $\gamma_{U(\mathfrak{A})}(n)\sim \exp(\sqrt{n})$.

Groups with oscillating growth were constructed by Kassabov and Pak in \cite{KassabovPak}. It is interesting to note that domains with intermediate growth were constructed by Smith before groups of intermediate growth were known.

\section{Affine domains with oscillating growth}

Let $\g$ be a finitely generated Lie algebra with basis $u_1,u_2,\dots$ and let $l_i=l(u_i)$ be the length of $u_i$. Let $L_i=#\{j|l_j=i\}$; note that this is the discrete derivative of the growth function of $\g$. Let $U=U(\g)$ be the universal enveloping algebra and denote its growth function by $\gamma_U(n)$ and the discrete derivative $\lambda_U(n)$. Then there is a connection between the growth functions of $\g$ and $U$ given by the following formula:

\begin{prop}[{\cite{Smith}}]
We have:
$$\sum_{n=0}^\infty \lambda_U(n)t^n = \prod_{i=1}^{\infty} \frac{1}{1-t^{l_i}} = \prod_{i=1}^{\infty} \frac{1}{(1-t^i)^{L_i}}.$$
\end{prop}

Petrogradsky carried this connection several steps further:

\begin{thm}[{\cite{Petrogradsky96} and \cite[Theorem~1.2\ (1)]{Petrogradsky00}}]\label{main petrogradsky}
If $L_n=n^{\alpha-1+o(1)}$ then $\lambda_U(n)=\exp\left(n^{\alpha/(\alpha+1)+o(1)}\right)$.
\end{thm}

Let $\psi:\mathbb{N}\rightarrow \mathbb{N}$ be an arbitrarily rapid subexponential function. Let $f:\mathbb{N}\rightarrow \mathbb{N}$ be an increasing, submultiplicative function which is infinitely often $\leq Cn^2$ (for some $C>0$) and infinitely often $\geq \psi(n)$. This is possible for instance using \cite[Theorem~1.11]{BBL}.

Then, combining with Theorem \ref{main thm Lie} and Theorem \ref{main petrogradsky} we get:

\begin{cor}

\end{cor}

\section{Introduction}\label{sec:intro}

The question of `how do algebras grow?', or, which functions can be realized as growth functions of algebras (associative/Lie/Jordan/other, or algebras having certain additional algebraic properties) is a major problem in the meeting point of several mathematical fields including algebra, combinatorics, symbolic dynamics and more.

There are obvious properties necessarily satisfied by the growth function of any finitely generated, infinite dimensional (we consider Lie or associative) algebras: they are always increasing and sub \textit{increasing} (namely, $f(n)<f(n+1)$) and \textit{subexponential} (namely, $f(n+m)\leq f(n)f(m)$). Several attempts have been made to realize as wide as possible variety of such functions as growth functions of associative algebras (up to a natural equivalence relation, to be defined in the sequel):
\begin{itemize}
    \item Smoktunowicz and Bartholdi \cite{BartholdiSmoktunowicz} proved that every increasing and submultiplicative function is equivalent to a growth function of an associative algebra, up to a polynomial factor;
    \item This method was later modified by the author \cite{BGIsr} to result in primitive algebras and in \cite{BGJalg} to obtain simple algebras with prescribed intermediate growth rates;
    \item Bell and Zelmanov \cite{BellZelmanov} found an additional condition (on discrete derivatives) satisfied by all growth functions, such that every increasing function satisfying it is equivalent to a growth function of an associative algebra (this condition is not formulated in asymptotic terms);
    \item The author found an example of an increasing, submultiplicative function which grows more rapidly than any given subexponential function, but is not equivalent to the growth function of any associative algebra (details will appear elsewhere).
\end{itemize}
It should be noted that interesting pathological examples of algebras with oscillating growth were found in \cite{BBL}. For more on growth of algebras see \cite{KrauseLenagan}.

The study of growth functions of Lie 
algebras is rather different than the study of growth of associative algebras, and seems, from certain points of view, even more enigmatic. For instance, they need not obey Bergman's gap theorem (which asserts that the growth of an associative algebra cannot be super-linear and subquadratic); polynomial growth rate for complex (finitely) $\mathbb{Z}$-graded simple Lie algebras is very rigid, in the sense that it forces one of several concrete structure (by Matiheu's classification, solving Kac's conjecture \cite{Mathieu}); there exist examples of nil Lie algebras of polynomial growth for arbitrarily large base fields \cite{ShestakovZelmanov} (the analog for associative algebras is unknown for uncountable fields); etc.
Interesting connections between growth of Lie algebras and the growth of their universal enveloping algebras were studied in \cite{Petrogradsky96,Petrogradsky00,Petrogradsky97,Petrogradsky93,Smith}. This work inspires our main theme in this note, whose aim is to prove the following:

\begin{thm} \label{thm_lie}
Let $f:\mathbb{N}\rightarrow \mathbb{N}$ be an increasing, submultiplicative function such that $f(n)\sim nf(n)$. Then there exists a finitely generated simple Lie algebra $\g$ whose growth function satisfies: $$\gamma_\g(n)\sim f(n).$$
\end{thm}
As an application, we are able to answer two questions left open by Petrogradsky (Corollaries \ref{answer Petrogradsky1} and \ref{answer Petrogradsky2}); we discuss this in Section \ref{sec:3}. We remark that Petrogradsky's questions do not involve simple Lie algebras, and our solutions to them can be derived, in fact, from Lemma \ref{lemma commutator}; however, we feel that Theorem \ref{thm_lie} is of its own interest, and after proving it in Section \ref{sec:2} we are able to provide constructions solving Petrogradsky's questions within the class of \textit{simple} Lie algebras (in Section \ref{sec:3}). We remark that simple groups of intermediate growth were only recently constructed by Nekrashevych \cite{SimpInte}.

The resulting Lie algebras of Theorem \ref{thm_lie} are $\mathbb{Z}$-graded (but the homogeneous components are infinite-dimensional). Note that the condition that $f(n)\sim nf(n)$ is satisfied by any `sufficiently regular' growth function which is rapid at least as $n^{\ln n}$ (also mentioned in \cite{BartholdiSmoktunowicz}; however, pathological counterexamples exists).

Our construction combines ingredients of four flavors:
\begin{itemize}
    \item Methods developed in \cite{BGJalg} for controlling the complexity growth of uniformly recurrent words (slightly improved here);
    \item theory of convolution algebras of \'etale groupoid algebras of Bernoulli shifts developed by Nekrashevych in \cite{Nekr};
    \item finite generation results by Alahmadi, Alsulami, Jain and Zelmanov from \cite{AA, AAJZ};
    \item classical result of Baxter \cite{Baxter} on simplicity of commutator Lie algebras modulo its intersection with the center.
\end{itemize}